\documentclass[11pt,a4paper]{amsart}
\usepackage{amssymb,amsthm,enumerate}
\usepackage{graphicx}
\usepackage{subcaption}
\usepackage{cite}
\usepackage[myheadings]{fullpage}
\usepackage{hyperref}
\hypersetup{
	colorlinks=true,   
	linkcolor=blue,
	citecolor=blue,
}
\hypersetup{
	colorlinks=true,   
	linkcolor=blue,
	citecolor=blue,
}

\newtheorem{cor*}{Corollary}
\newtheorem{prop*}{Proposition}
\newtheorem{thm*}{Theorem}

\newtheorem{theorem}{Theorem}[section]

\newtheorem{prop}[theorem]{Proposition}

\theoremstyle{definition}

\newtheorem{exmp}[theorem]{Example}

\newcommand{\F}{\mathcal{F}}
\newcommand{\G}{\mathcal{G}}
\newcommand{\Z}{\mathbb{Z}}
\newcommand{\N}{\mathbb{N}}
\newcommand{\C}{\mathcal{C}}
\newcommand{\h}{\mathcal{H}}
\newcommand{\BS}{\mathrm{BS}}
\DeclareMathOperator{\map}{Mod}
\DeclareMathOperator{\Diff}{Diff^{+}}

\everymath{\displaystyle}

\makeatletter
\@namedef{subjclassname@2020}{\textup{2020} Mathematics Subject Classification}
\makeatother

\begin{document}
\emergencystretch=3em
\hbadness=3000
\title[Baumslag-Solitar subgroups of the mapping class group]{Baumslag-Solitar subgroups of \\ the mapping class group}
	
\author{Pankaj Kapari}
\address{Department of Mathematics\\
Tata Institute of Fundamental Research Mumbai\\
A-431, School of Mathematics, TIFR, Old Navy Nagar, Colaba\\ Mumbai 400005, Maharashtra\\
India}
\email{pankajkapri02@gmail.com}
\urladdr{https://sites.google.com/view/pankajkapdi/home}

\author{Dev Krishna}
\address{Department of Mathematics\\
Indian Institute of Science Education and Research Bhopal\\
Bhopal Bypass Road, Bhauri \\
Bhopal 462066, Madhya Pradesh\\
India}
\email{dev22@iiserb.ac.in}
	
\author{Kashyap Rajeevsarathy}
\address{Department of Mathematics\\
Indian Institute of Science Education and Research Bhopal\\
Bhopal Bypass Road, Bhauri \\
Bhopal 462066, Madhya Pradesh\\
India}
\email{kashyap@iiserb.ac.in}
\urladdr{https://home.iiserb.ac.in/\string~kashyap/}

\subjclass[2020]{Primary 57K20, Secondary 57M60}
	
\keywords{Baumslag-Solitar groups, mapping class group, generalized Nielsen realization}
	
\begin{abstract}
For $g\geq 2$ and nonzero integers $p,q$, let $\mathrm{Mod}(S_g)$ be the mapping class group of a closed oriented surface $S_g$ of genus $g$, and let $\mathrm{BS}(p,q)$ be the Baumslag-Solitar group. We provide necessary and sufficient conditions under which two mapping classes $G,F\in \mathrm{Mod}(S_g)$ generate a subgroup isomorphic to $\mathrm{BS}(p,q)$. In particular, if $\mathrm{BS}(p,q)$ embeds in $\mathrm{Mod}(S_g)$, then $|p|=|q|$ and $G,F$ are reducible mapping classes of infinite order. We also construct subgroups isomorphic to $\mathrm{BS}(p,p)$ and $\mathrm{BS}(p,-p)$ for $p>1$. Finally, we show that every infinite metacyclic subgroup and every Baumslag-Solitar subgroup of $\mathrm{Mod}(S_g)$ lifts to an isomorphic subgroup of $\mathrm{Diff}^+(S_g)$, that is, these subgroups satisfy the generalized Nielsen realization.
\end{abstract}

\maketitle
\section{Introduction}
For $g\geq 2$, let $S_g$ be a connected, closed, and oriented surface of genus $g$. Let $\Diff(S_g)$ be the group of orientation-preserving homeomorphisms of $S_g$ with the compact-open topology. The \textit{mapping class group} $\map(S_g)$ of $S_g$ is defined by $\pi_0(\Diff(S_g))$, that is, it is the group of isotopy classes (or path components) of elements of $\Diff(S_g)$. For nonzero integers $p,q$, the \textit{Baumslag-Solitar group} is given by a presentation $\BS(p,q)=\langle G, F \mid G F^p G^{-1} = F^q \rangle$. Given $G,F\in \map(S_g)$, it is natural to ask the following question: can one derive necessary and sufficient conditions under which $F$ and $G$ generate a subgroup of $\map(S_g)$ isomorphic to $\BS(p,q)$? We note that $\BS(1,\pm 1)$ is an infinite metacyclic group (cyclic extension of a cyclic group), and such necessary and sufficient conditions were derived in~\cite{inf_meta_ggd} for this case. For $p>1$, Kent-Leininger~\cite{leininger_survey} constructed a subgroup of $\map(S_g)$ isomorphic to $\BS(p,p)$. Taking inspiration from these works, in this paper, we derive such conditions for non-metacyclic $\BS(p,q)$ and give some constructive examples of such subgroups of $\map(S_g)$.

A simple closed curve is said to be \textit{essential} if it is not null-homotopic. A collection of isotopy classes of pairwise disjoint essential simple closed curves on $S_g$ is called a \textit{multicurve} on $S_g$. The elements of $\map(S_g)$ are called \textit{mapping classes}. A mapping class $F\in \map(S_g)$ is said to be \textit{periodic} if the order of $F$ is finite and \textit{reducible} if $F$ preserves a multicurve on $S_g$ (this multicurve is called a \textit{reduction system} for $F$). A mapping class is called \textit{irreducible} if it is not reducible. Due to Nielsen-Thurston classification~\cite{thurston_bams_1988}, irreducible mapping classes of infinite order are called \textit{pseudo-Anosov} mapping classes. For a reducible mapping class $F$, the intersection of all maximal reduction systems of $F$ is known as the \textit{canonical reduction system} for $F$, and it will be denoted by $\C(F)$. For a simple closed curve $c$, let $T_c$ denote the left-handed Dehn twist about $c$. A product of powers of Dehn twists about curves in a multicurve $C$ is called a \textit{multitwist} about $C$.

Let $F\in \map(S_g)$ be an infinite-order reducible mapping class whose canonical reduction system is $\C(F)=\{c_1,c_2,\dots,c_k\}$. Let $N$ be an $F$-invariant closed regular neighborhood of $\mathcal{C}(F)$. There exists a least positive integer $n$ such that $F^n$ fixes each path component of $\overline{S_g\setminus N}$ and each curve in $\C(F)$, and
\begin{equation}
\label{eq:nt_decomp}
F^n=T_{c_1}^{n_1}T_{c_2}^{n_2}\cdots T_{c_k}^{n_k}\eta_1(F_1)\eta_2(F_2)\cdots \eta_k(F_k),
\end{equation}
where each $F_i \in \map(R_i)$ is either the identity or pseudo-Anosov, $n_i\in \Z$, $R_i$ is a path component of $S_g(\C(F))$, and $\eta_i:\map(R_i) \rightarrow \map(S_g)$ is the natural inclusion map. The product $T_{c_1}^{n_1}T_{c_2}^{n_2}\cdots T_{c_k}^{n_k}$ appearing in~\eqref{eq:nt_decomp} is the \textit{multitwist component} of $F$. The surface obtained by capping the boundary components of $\overline{S_g\setminus N}$ by marked disks will be denoted by $S_g(\C(F))$. A mapping class $F\in \map(S_g)$ is said to be \textit{pseudo-periodic} if some power of $F$ is a multitwist.

When $p,q>0$, Kent-Leininger~\cite{leininger_survey} proved that if $\BS(p,q)$ embeds in $\map(S_g)$ then $p=q$. In this direction, we proved the following result (see Proposition~\ref{prop:bspq_in_mod}).

\begin{prop*}
\label{prop1}
For $g\geq 2$, $p,q\in \Z\setminus \{0\}$, and $F,G\in \map(S_g)$, let
\[
\BS(p,q)=\langle F,G \mid GF^pG^{-1}=F^q\rangle
\]
be a subgroup of $\map(S_g)$. Then:
\begin{enumerate}[(i)]
\item $|p|=|q|$ and
\item $F$, $G$ are reducible mapping classes of infinite order.
\end{enumerate}
\end{prop*}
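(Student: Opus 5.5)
We first record the basic group-theoretic facts we need. In $\BS(p,q)$ the element $F$ has infinite order. If $|p|\neq|q|$, or more generally for any $p,q$ with $|p|,|q|$ not both $1$, the group is not virtually abelian. Note also that $G$ has infinite order: the map $\BS(p,q)\to\Z$ sending $F\mapsto 0$ and $G\mapsto 1$ is well defined. If $|p|=|q|=1$, the group is metacyclic and that case is treated in~\cite{inf_meta_ggd}; our main focus is the non-metacyclic case, though the argument below covers both.

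\textbf{Step 1: $F$ is not pseudo-Anosov.} Suppose $F$ were pseudo-Anosov. The relation $GF^pG^{-1}=F^q$ says that $G$ conjugates $F^p$ to $F^q$. Hence $G$ maps the stable and unstable foliations of $F^p$, which are those of $F$, to those of $F^q$, again those of $F$. Therefore $G$ lies in the stabilizer of the pair of fixed points of $F$ in $\mathcal{PMF}$. By McCarthy's theorem, this stabilizer is virtually cyclic and contains $\langle F\rangle$ with finite index.

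Comparing stretch factors gives $\lambda^{|p|}=\lambda^{|q|}$, where $\lambda>1$ is the stretch factor of $F$; hence $|p|=|q|$. In any case, $\langle F,G\rangle$ would be virtually cyclic. But $\BS(p,q)$ is not virtually cyclic: it contains $\langle F^p,G\rangle$, which is $\Z^2$ or a Klein bottle group when $|p|=|q|$, and it is non-solvable otherwise. This is a contradiction, so $F$ is reducible of infinite order.

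\textbf{Step 2: $|p|=|q|$.} Write $F$ in the form~\eqref{eq:nt_decomp}. Conjugation by $G$ carries $\C(F^p)=\C(F)$ to $\C(F^q)=\C(F)$. It therefore permutes the curves of $\C(F)$ and the components $R_i$, and it conjugates the Nielsen–Thurston pieces of $F^{pn}$ to those of $F^{qn}$. We use two invariants.
\begin{enumerate}[(a)]
\item If some $F_i$ is pseudo-Anosov, then $G$ sends $R_i$ to some $R_j$ and matches the stretch factors $\lambda_i^{|p|}=\lambda_j^{|q|}$. Since $G$ permutes the finitely many components, iterating along a $G$-orbit gives $\prod\lambda^{|p|}=\prod\lambda^{|q|}$ over the orbit, hence $|p|=|q|$.
\item Otherwise $F$ is pseudo-periodic, so $F^n$ is a multitwist $\prod T_{c_i}^{n_i}$ with some $n_i\neq 0$. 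Conjugating $F^{np}$ to $F^{nq}$ by $G$ yields $p\,n_i=q\,n_{\sigma(i)}$ for a permutation $\sigma$. Taking the product over a $\sigma$-orbit of $|n_i|$ gives $|p|^m=|q|^m$.
\end{enumerate}
The conjugation invariance of twist exponents and stretch factors, via uniqueness of the canonical decomposition, is the main bookkeeping step.

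\textbf{Step 3: $G$ is reducible of infinite order.} We showed above that $G$ has infinite order. Suppose $G$ were pseudo-Anosov. Then $G$ preserves $\C(F)$, a nonempty multicurve by Step 1, since the canonical reduction system of $F$ is nonempty for a reducible infinite-order class. But a pseudo-Anosov mapping class fixes no multicurve, which is a contradiction. This step is routine.

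I expect the main obstacle to be Step 1's reliance on the structure of the pseudo-Anosov centralizer and normalizer, and the careful uniqueness statement in Step 2.
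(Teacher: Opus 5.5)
Your proof is correct. It reorganizes the paper's argument, and in one place it is more complete.

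\textbf{How the paper argues.} The paper proves (i) first. It splits on whether $F$ is pseudo-Anosov (comparing stretch factors) or reducible (comparing the multitwist components of $GF^pG^{-1}$ and $F^q$). It then gets (ii) for both generators at once: $\langle F^p,G^2\rangle\cong\Z^2$, and \cite[Theorem 3.1]{inf_meta_ggd} forbids a pseudo-Anosov element in such a subgroup.

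\textbf{How your route differs.}
\begin{enumerate}[(a)]
\item You rule out $F$ pseudo-Anosov first, via McCarthy's theorem on the stabilizer of the fixed-point pair of $F$ in $\mathcal{PMF}$. This is the same ``virtually cyclic'' mechanism behind the paper's citation. As a result, the stretch-factor computation for pseudo-Anosov $F$ becomes unnecessary.
\item You rule out $G$ pseudo-Anosov because $G$ preserves the nonempty multicurve $\C(F)$. This is more elementary than the paper's $\Z^2$ argument, though it is not symmetric in $F$ and $G$: it needs Step~1 first.
\item Your Step 2 is more careful than the paper's multitwist comparison. The paper's displayed identity carries no information when every $n_i=0$, for instance when $F$ is a partial pseudo-Anosov with no twisting along $\C(F)$. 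Its phrase ``depending on signs of $n_i$'s'' also skips the fact that $G$ permutes the curves. Your case (a), multiplying stretch factors along a $G$-orbit of components, and your case (b), multiplying twist exponents along a $\sigma$-orbit, handle both points.
\end{enumerate}

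\textbf{A small correction.} Your remark that $\BS(p,q)$ is ``non-solvable otherwise'' is false for $\BS(1,n)$, which is solvable. You do not need it: in that branch you have already shown $|p|=|q|$. In any case, $\BS(p,q)$ is never virtually cyclic, since the homomorphism to $\Z$ shows that no nonzero power of $G$ lies in $\langle F\rangle$.
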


From now on, we assume that $p>1$ and $q=p\epsilon$, where $\epsilon=\pm 1$. Given $G,F\in \map(S_g)$, the following result provides necessary and sufficient conditions under which $F$ and $G$ generate a subgroup of $\map(S_g)$ isomorphic to $\BS(p,p\epsilon)$ such that $\langle F^p\rangle \triangleleft \langle F,G\rangle$, where $\epsilon=\pm 1$ (see Theorem~\ref{thm:main_theorem}).

\begin{thm*}
\label{thm1}
For $g\geq 2$, let $F,G \in \map(S_g)$ be nontrivial reducible mapping classes of infinite order. Let
\begin{center}
$T_{c_1}^{n_1}T_{c_2}^{n_2}\cdots T_{c_k}^{n_k}$ and $T_{d_1}^{m_1}T_{d_2}^{m_2}\cdots T_{d_{\ell}}^{m_{\ell}}$
\end{center}
be the multitwist component of $F$ and $G$, $\C(F)=\{c_1,c_2,\dots,c_k\}$ and $\C(G)=\{d_1,d_2,\dots,d_{\ell}\}$ are the canonical reduction system of $F$ and $G$, respectively, where $n_i,m_j\in \Z$ for $1\leq i\leq k$ and $1\leq j\leq \ell$. For $\epsilon=\pm 1$ and $p>1$, we have
\[
\BS(p,p\epsilon)=\langle F,G \mid GF^pG^{-1}=F^{p\epsilon} \rangle
\]
if and only if the following conditions hold:
\begin{enumerate}[(i)]
\item $\mathcal{C}(F) \cup \mathcal{C}(G)$ is a multicurve.
\item $G(A_i) = B_i$, $G(B_i) = A_i$, and $F^p(C_i) = C_i$ for every $i$, where
\begin{align*}
A_i&=\{c_j\in\C(F)\mid n_j=n_i\},\\
B_i&=\{c_j\in\C(F)\mid n_j=\epsilon n_i\},\\
C_i&=\{d_j\in\C(G)\mid m_j=m_i\}.
\end{align*}
\item For every path component $R$ of $S_g(\mathcal{C}(F))$, we have
\[
\langle F_r,G_r\mid G_rF_r^pG_r^{-1}=F_r^{p\epsilon^{q_r}}, F_r^{u_r}=G_r^{v_r}=1 \rangle,
\]
where $F_r,G_r\in \map(R)$ are induced by $F,G$, $u_r,v_r\in \N\cup \{\infty\}$ are orders of $F,G$, respectively, $q_r\in \N$ is the size of the orbit of $R$ under $G$.
\item For two path components $R$ and $S$ of $S_g(\mathcal{C}(F))$ such that $G(R) = S$, we have $F_r^p$ is conjugate to $F_s^{p\epsilon}$, where $F_r \in \map(R)$ and $F_s \in \map(S)$ are induced by $F$.
\end{enumerate}
\end{thm*}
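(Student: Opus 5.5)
The plan is to reduce the relation $GF^pG^{-1}=F^{p\epsilon}$ to data attached to the canonical reduction systems, and then to reassemble it from that data. The tools are the naturality of canonical reduction systems, $\C(hFh^{-1})=h(\C(F))$ and $\C(F^m)=\C(F)$ for $m\neq 0$, together with the uniqueness of the decomposition~\eqref{eq:nt_decomp}.

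\emph{Necessity.} Assume $\langle F,G\rangle\cong \BS(p,p\epsilon)$ through the given relation. Then $G$ conjugates $F^p$ to $F^{p\epsilon}$, so
\[
G(\C(F))=G(\C(F^p))=\C(F^{p\epsilon})=\C(F).
\]
Thus $G$ preserves $\C(F)$. Since $F$ commutes with $F^p$ and with $G^{-1}F^pG=F^{p\epsilon}$, the element $F$ also satisfies $F(\C(G))=\C(G)$: apply the same naturality argument to the centralizer relations, which Proposition~\ref{prop1}'s proof presumably uses. I would then argue that $\C(F)\cup\C(G)$ has no essential intersections. If some curves intersected, a ping-pong or train-track argument in the spirit of Ivanov would show that suitable powers of $F^p$ and of a conjugate generate a free group, contradicting the relation; alternatively one can use that $G$ normalizes $\langle F^p\rangle$ and apply Ivanov's result that the canonical reduction system of a normalized element is preserved. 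This gives (i).

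For (ii), raise the relation to the power $n$ appearing in~\eqref{eq:nt_decomp}. Comparing the multitwist components of $GF^{pn}G^{-1}$ and of $F^{pn\epsilon}$ shows that $G$ maps the curve $c_j$ with twist exponent $n_j$ to a curve with exponent $\epsilon n_j$. This is exactly $G(A_i)=B_i$ and $G(B_i)=A_i$. Running the symmetric argument with $F^p$ acting on $\C(G)$ gives $F^p(C_i)=C_i$.

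For (iii) and (iv), restrict to the components of $S_g(\C(F))$. If $G(R)=S$, then $G$ induces a homeomorphism $R\to S$ that conjugates $F_r^p$ to $F_s^{p\epsilon}$, which is (iv). Following the $G$-orbit of $R$ around $q_r$ steps, each step contributing one factor of $\epsilon$, yields the relation $G_rF_r^pG_r^{-1}=F_r^{p\epsilon^{q_r}}$ for the induced maps on $R$, which is (iii).

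\emph{Sufficiency.} Here I would first verify that the relation holds and then prove there are no further relations. For the relation, compare the decompositions of $GF^pG^{-1}$ and $F^{p\epsilon}$ term by term. The multitwist parts agree by (ii). On the components of $S_g(\C(F))$, the two sides agree up to isotopy by (iii) and (iv), where (iv) supplies the identification along each $G$-orbit. Two mapping classes that preserve the same multicurve and agree on every piece and on every twist exponent coincide. A subtlety is the twist ambiguity along $\C(F)$ coming from the boundary-capping, which is why the multitwist condition (ii) must be tracked separately.

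For faithfulness I would use Britton's lemma on the HNN structure of $\BS(p,p\epsilon)$. Since $\langle F^p\rangle$ is normal in $\BS(p,p\epsilon)$, the quotient is $\Z/p * \Z$ when $\epsilon=1$, and $\Z/p\rtimes$-type when $\epsilon=-1$. So it suffices to show two things: $F^p$ has infinite order, which holds because $F$ is of infinite order; and the induced map from $\langle F,G\rangle/\langle F^p\rangle$ onto its image is injective.

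The main obstacle is this faithfulness step. I would try to realize the quotient acting on a suitable structure, such as the set of multicurves or the curve complex of the $F$-invariant pieces. The aim is to show that a reduced word $G^{a_1}F^{b_1}\cdots$ with $0<b_i<p$ acts nontrivially, using that $F^b$ for $0<b<p$ is not a power compatible with $G$'s normalization, and then to deduce injectivity by a ping-pong argument.
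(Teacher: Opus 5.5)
The gap is in the converse, and it is more than an unfinished ping-pong step: the faithfulness you need does not follow from (i)--(iv) at all. Take $\epsilon=1$, $F=T_c$ and $G=T_d$, where $c,d$ are disjoint, non-isotopic, and $c\cup d$ is nonseparating. Then:
\begin{itemize}
\item $\C(F)\cup\C(G)=\{c,d\}$ is a multicurve.
\item $A_1=B_1=\{c\}$ is fixed by $G$, and $C_1=\{d\}$ is fixed by $F^p$.
\item $S_g(\{c\})$ is a single component $R$ with $q_r=1$, $F_r=1$ ($u_r=1$) and $G_r=T_d$ ($v_r=\infty$). So $\langle F_r,G_r\rangle\cong\Z$ is exactly the group presented in (iii).
\item (iv) is trivial.
\end{itemize}
Yet $\langle F,G\rangle\cong\Z^2$. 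This is not $\BS(p,p)$, because $\BS(p,p)$ surjects onto the nonabelian group $\Z_p*\Z$.

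Your reduction is correct and is the right target: $\langle F^p\rangle$ is normal and $F^p$ has infinite order, so it suffices that $\langle F,G\rangle/\langle F^p\rangle\cong\Z_p*\Z$. But that isomorphism has to come from an extra hypothesis; in Example~\ref{exmp1} it comes from Min's result giving $\Z*\Z_p$ on $S_g(\{c\})$. No curve-complex argument can extract it from (i)--(iv). The paper's converse is a one-sentence appeal to the presentations in (iii), and the example above satisfies those too, so it does not close this hole either.

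Your verification of the relation itself is also unjustified. Condition (iv) only says $F_r^p$ and $F_s^{p\epsilon}$ are conjugate by \emph{some} homeomorphism $R\to S$, not by the one induced by $G$. Condition (iii) only constrains $G^{q_r}$. So the claim that ``(iv) supplies the identification along each $G$-orbit'' does not follow.

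For necessity, your route is a legitimate, self-contained replacement for the paper's citation of \cite[Theorem 3.12]{inf_meta_ggd}. You use naturality, $\C(hFh^{-1})=h(\C(F))$ and $\C(F^m)=\C(F)$, and then compare multitwist components. Your restriction argument for (iii)--(iv) is the same as the paper's. Three corrections are needed:
\begin{itemize}
\item[(a)] $F(\C(G))=\C(G)$ is false in general. In Example~\ref{exmp3}, $\C(G)=\{a\}$ but $F(a)=H(a)\neq a$. What holds is $F^p(\C(G))=\C(G)$, because $F^p$ commutes with $G^2$ and $\C(G^2)=\C(G)$. Comparing multitwist components in $F^pG^{2n}F^{-p}=G^{2n}$ then gives $F^p(C_i)=C_i$. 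For $\epsilon=-1$, $F^p$ does not commute with $G$, so your ``symmetric argument'' must go through $G^2$.
\item[(b)] No ping-pong is needed for (i). Since $G(\C(F))=\C(F)$, $\C(F)$ is a reduction system for $G$, so it lies in a maximal reduction system, and that system contains $\C(G)$.
\item[(c)] For $\epsilon=-1$ the quotient by $\langle F^p\rangle$ is also $\Z_p*\Z$, not a semidirect product, since imposing $F^p=1$ kills the defining relation.
\end{itemize}
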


In Section~\ref{sec:bsmn_mod}, we provide some examples of Baumslag-Solitar subgroups of $\map(S_g)$ whose constructions differ from that of Kent-Leininger (see Examples~\ref{exmp3} - \ref{exmp4}). Let $\pi:\Diff(S_g)\to \map(S_g)$ be the natural projection map. Due to Kerckhoff~\cite{kerckhoff_annals_1983}, finite subgroups of $\map(S_g)$ can be realized as a group of hyperbolic isometries for some hyperbolic metric on $S_g$. Since the isometry group of a closed hyperbolic surface is finite~\cite[Chapter 7]{primer}, infinite subgroups of $\map(S_g)$ cannot be realized as groups of hyperbolic isometries. For infinite subgroups of $\map(S_g)$, one can ask if they can be realized as a subgroup of $\Diff(S_g)$, that is, does an infinite subgroup $H$ of $\map(S_g)$ lift to an isomorphic subgroup of $\Diff(S_g)$ under the map $\pi$. In the literature, this is called the \textit{section problem} or the \textit{generalized Nielsen realization problem}~\cite[Section 6.3]{farb_problems_book} (also see~\cite{mann_gnr}). It is known that~\cite{mark} $\map(S_g)$ does not lift to $\mathrm{Homeo}^+(S_g)$ under the natural map $\mathrm{Homeo}^+(S_g) \to \map(S_g)$. In contrast, free subgroups and free abelian subgroups of $\map(S_g)$ can be lifted to $\Diff(S_g)$. In this direction, we have the following result (see Theorems~\ref{thm:nr_inf_meta} - \ref{thm:nr_bsmn}).

\begin{thm*}
\label{thm2}
For $g\geq 2$, let $H$ be either an infinite metacyclic subgroup or a Baumslag-Solitar subgroup of $\map(S_g)$. Then there exists a subgroup $\tilde{H}$ of $\Diff(S_g)$ such that $\tilde{H}\xrightarrow{\pi} H$ is an isomorphism. 
\end{thm*}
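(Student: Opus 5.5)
We have to show that every infinite metacyclic subgroup and every Baumslag--Solitar subgroup $H=\langle F,G\rangle$ of $\map(S_g)$ lifts isomorphically to $\Diff(S_g)$. The plan is to build explicit diffeomorphisms $\tilde F,\tilde G$ that satisfy exactly the relations of $H$, and then check that the lifted group is not a proper quotient of the abstract group.

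\textbf{Setting up the decomposition.} By Proposition~\ref{prop1} and Theorem~\ref{thm1}, in the Baumslag--Solitar case $F$ and $G$ are reducible of infinite order. Moreover $\C(F)\cup\C(G)$ is a multicurve $M$, and both $F$ and $G$ preserve $M$ and respect the blocks $A_i,B_i,C_i$. The metacyclic case is handled the same way, using the characterization of~\cite{inf_meta_ggd} in place of Theorem~\ref{thm1}. Fix a $\langle F,G\rangle$-invariant closed annular neighborhood $N$ of $M$ and cut $S_g$ along it.

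\textbf{Complementary pieces.} On each component $R$ of $\overline{S_g\setminus N}$, condition (iii) of Theorem~\ref{thm1} says that the restricted stabilizer group $\langle F_r,G_r\rangle$ has one of two forms. Either it is finite, or it is generated by a pseudo-Anosov together with a finite-order element that normalizes it up to sign. In the finite case, Kerckhoff's Nielsen realization~\cite{kerckhoff_annals_1983} (for surfaces with boundary, after capping with marked disks) realizes it by isometries of some hyperbolic metric. In the pseudo-Anosov case, the group is virtually cyclic, so I would realize it by affine maps with respect to the flat structure given by the invariant foliations, using that the normalizer of a pseudo-Anosov acts by isometries or anti-scalings of that structure. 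Representatives on components in the same $G$-orbit are then transported by the chosen representative of $G$. Condition (iv), the conjugacy of $F_r^p$ and $F_s^{p\epsilon}$, is what makes this transport compatible with the relation.

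\textbf{Annuli and gluing.} On each annulus of $N$ I will use explicit twist maps with linear twisting profiles. Condition (ii), which says that $G$ permutes curves of equal twist exponents, lets one choose the profiles so that $\tilde G\tilde F^p\tilde G^{-1}=\tilde F^{p\epsilon}$ holds exactly, not just up to isotopy. The main obstacle is gluing the annular and complementary pieces smoothly while keeping the relation exact. On the boundary circles, the complementary maps act by rotations (isometric case) or by something more complicated (affine case). I would first try to straighten near the boundary, so that every piece acts by rigid rotations on collars. Twists are central enough in the collar group that the relation can be checked componentwise: rotations commute, and the group of twists on an annulus $\times$ rotations is abelian.

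\textbf{Injectivity.} Since $\pi\circ\tilde{}$ is the identity on generators, the lifted group $\tilde H$ surjects onto $H$. Because $\tilde H$ satisfies the defining relations of $H$, the map $H\to\tilde H$ sending generators to their lifts is well defined, and composing with $\pi$ gives the identity of $H$. Hence $\tilde H\cong H$.
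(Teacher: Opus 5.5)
Your argument breaks down in the Baumslag--Solitar case, already at the decomposition.

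First, $F$ need not preserve $\C(G)$: condition (ii) of Theorem~\ref{thm:main_theorem} only gives $F^p(C_i)=C_i$. In Example~\ref{exmp3}, $G=T_a$ and $F=HT_b$ with $a,b$ disjoint. So $F(a)=H(a)$, and $H(a)$ meets $a$, since the twists about $a$ and $H(a)$ generate a free group. Hence no multicurve containing $\C(G)$ is $\langle F,G\rangle$-invariant, and your neighborhood $N$ of $\C(F)\cup\C(G)$ does not exist. The only invariant system available is $\C(F)$, which $G$ preserves by (ii); this is what the paper cuts along.

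Second, the dichotomy you read into (iii) (finite, or a pseudo-Anosov normalized up to sign by a finite-order element) describes the pieces in the \emph{metacyclic} case~\cite{inf_meta_ggd}, not here. In Example~\ref{exmp1}, the group induced on $S_g(\{c\})$ is $\langle G,F'\rangle\cong\Z*\Z_p$, with $G$ pseudo-Anosov and $F'$ of order $p$. In Example~\ref{exmp3}, it is $\langle T_a,H\rangle\cong\Z*\Z_3$ with $T_a$ reducible. These groups are infinite and not virtually cyclic, so neither Kerckhoff's theorem nor affine maps of a single flat structure can realize them. The transport step has a related flaw: (iv) only makes $F_r^p$ and $F_s^{p\epsilon}$ conjugate, so conjugating a chosen lift of $F_r$ by $\tilde G$ need not produce a lift of $F_s$.

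The missing idea is that the whole group $\langle F_r,G_r\rangle$ never has to be realized. Your injectivity paragraph already shows that for $\BS(p,p\epsilon)$ it suffices to achieve the single relation $\tilde G\tilde F^p\tilde G^{-1}=\tilde F^{p\epsilon}$ exactly. So on each piece only the metacyclic group $\langle G_r,F_r^p\rangle$ matters. The paper realizes that group via Theorems~\ref{thm:bonahon_flp} and~\ref{thm:chilina}; where $F_r^p$ is pseudo-Anosov, your affine realization would do equally well. The paper then takes a Nielsen--Thurston-type lift of $F_r$ and uses Bonahon's theorem to conjugate its $p$-th power, by a diffeomorphism isotopic to the identity, onto the realized lift of $F_r^p$. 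It replaces the lift of $G_r$ by its conjugate under the same map. This extraction of an exact $p$-th root rests on uniqueness of Nielsen--Thurston representatives up to isotopically trivial conjugacy. It is what makes the relation hold on pieces like those above, and nothing in your proposal plays its role.

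Your collar step is also unproved. A pseudo-Anosov piece acts on a boundary circle with prong dynamics rather than by a rotation, and any straightening must be equivariant to keep the relation exact. The paper leaves gluing implicit as well, so this point is secondary.

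For infinite metacyclic subgroups, $\C(F)\cup\C(G)$ is invariant and, by~\cite{inf_meta_ggd}, the pieces are of your two types. There your outline is a reasonable variant of the paper's, with affine maps in place of the Bonahon--Chilina argument.
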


This paper is organized as follows. In Section~\ref{sec:bsmn_mod}, we study Baumslag-Solitar subgroups of $\map(S_g)$ and prove Proposition~\ref{prop1} and Theorem~\ref{thm1}. Furthermore, we construct some Baumslag-Solitar subgroups of $\map(S_g)$ (see Examples~\ref{exmp1} - \ref{exmp4}). We conclude the paper by proving Theorem~\ref{thm2} in Section~\ref{sec:gnr_bsmn}.

\section{Baumslag-Solitar subgroups of the mapping class group}
\label{sec:bsmn_mod}
In this section, first we determine when $\BS(p,q)$ embeds in $\map(S_g)$ and after that we will provide necessary and sufficient conditions under which two mapping classes generate a Baumslag-Solitar subgroup of $\map(S_g)$. We will also construct some explicit Baumslag-Solitar subgroups of $\map(S_g)$.

The proof of part~(i) of the following proposition in the case when $p,q>0$ is due to Kent-Leininger~\cite[Lemma 8.2]{leininger_survey}. A similar argument works when $p>0$ and $q<0$. We are including a proof for the sake of completion.

\begin{prop}
\label{prop:bspq_in_mod}
For $g\geq 2$, $p,q\in \Z\setminus \{0\}$, and $F,G\in \map(S_g)$, let
\[
\BS(p,q)=\langle F,G \mid GF^pG^{-1}=F^q\rangle
\]
be a subgroup of $\map(S_g)$. Then:
\begin{enumerate}[(i)]
\item $|p|=|q|$ and
\item $F$, $G$ are reducible mapping classes of infinite order.
\end{enumerate}
\end{prop}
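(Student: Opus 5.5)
We argue by analyzing how $F$ can fit into the Nielsen–Thurston classification, using the relation $GF^pG^{-1}=F^q$ together with the fact that $\BS(p,q)$ is torsion-free. The key algebraic inputs are standard. In $\BS(p,q)$ the generators $F$ and $G$ have infinite order; $F$ generates an infinite cyclic subgroup, and $G$ maps to a generator of $\Z$ under the abelianization map sending $F\mapsto 0$ (well defined since the relator has $G$-exponent sum zero). Moreover $F$ and $G$ do not commute, and $\langle F,G\rangle$ is not virtually abelian, because $\BS(p,q)$ is non-solvable unless $|p|=1$ or $|q|=1$. In all cases $G\notin \langle F\rangle$ virtually: no power of $G$ lies in $\langle F \rangle$.

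\textbf{Step 1: ruling out periodic elements.} Since $\BS(p,q)$ is torsion-free and embeds, $F$ and $G$ have infinite order.

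\textbf{Step 2: $F$ is not pseudo-Anosov.} Suppose $F$ is pseudo-Anosov with stretch factor $\lambda>1$. Then $F^p$ is pseudo-Anosov with stretch factor $\lambda^{|p|}$, and its conjugate $GF^pG^{-1}$ has the same stretch factor. On the other hand $F^q$ has stretch factor $\lambda^{|q|}$, so $|p|=|q|$. Next, $G$ conjugates $F^p$ to $F^{\pm p}$, so $G$ preserves the pair of fixed points of $F$ in $\mathcal{PMF}$ (the stable and unstable laminations). Hence $G$ lies in the stabilizer of $\{\mathcal{F}^s,\mathcal{F}^u\}$, which is virtually cyclic and contains $\langle F\rangle$ with finite index (McCarthy). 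Then some power $G^m$ lies in $\langle F\rangle$. This contradicts Britton's lemma in $\BS(p,q)$, since $G^m$ has nonzero $G$-exponent sum while every power of $F$ has $G$-exponent sum zero. So $F$ is reducible.

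\textbf{Step 3: $G$ is not pseudo-Anosov.} Suppose $G$ is pseudo-Anosov. Then $G$ conjugates $F^p$ to $F^q$, so $G$ maps $\C(F^p)=\C(F)$, which is nonempty since $F$ is reducible of infinite order, onto $\C(F^q)=\C(F)$. This is impossible: a pseudo-Anosov fixes no multicurve. Therefore $G$ is reducible. This establishes (ii).

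\textbf{Step 4: $|p|=|q|$ when $F$ is reducible.} Use the decomposition~\eqref{eq:nt_decomp}. Replace $F$ by $F^n$, taking the power so that it fixes every curve of $\C(F)$ and every component; the relation $G F^{pn} G^{-1} = F^{qn}$ still holds. Write $F^n$ as the multitwist $\prod T_{c_i}^{n_i}$ times pseudo-Anosov pieces $F_i$. Conjugation by $G$ permutes $\C(F)$ and the components of $S_g(\C(F))$. Comparing $GF^{np}G^{-1}$ with $F^{nq}$ gives two constraints:
\begin{itemize}
\item[(a)] On each pseudo-Anosov piece, stretch factors satisfy $\lambda_{\sigma(i)}^{|q|}=\lambda_i^{|p|}$ along the permutation $\sigma$ induced by $G$. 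Iterating around a cycle of length $r$ gives $\lambda_i^{|p|^r}=\lambda_i^{|q|^r}$, so $|p|=|q|$.
\item[(b)] If instead all pieces are identity, then $F^n$ is a nontrivial multitwist and the twist exponents satisfy $p\,n_i = q\,n_{\tau(i)}$ along the permutation $\tau$ of curves. Iterating around a cycle gives $p^r n_i = q^r n_i$ with $n_i\neq 0$, so $|p|=|q|$.
\end{itemize}

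The main obstacle is making Step 4 rigorous. One must check that canonical reduction systems and the decomposition~\eqref{eq:nt_decomp} are natural under conjugation and powers, so that $\C(F^{k})=\C(F)$ and the pieces and twist exponents transform as claimed. One must also handle the marked-disk and component bookkeeping. I would cite the standard naturality $\C(hFh^{-1})=h(\C(F))$ from the Primer.
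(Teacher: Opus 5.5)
Your argument is correct, but it is organized differently from the paper's.

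\textbf{The paper's route.} It proves (i) first, using stretch factors when $F$ is pseudo-Anosov and a comparison of the multitwist components of $GF^pG^{-1}$ and $F^q$ when $F$ is reducible. Only then does it obtain (ii), by observing that $\langle F^p,G^2\rangle\cong\Z^2$ (which needs $|p|=|q|$) and citing \cite[Theorem 3.1]{inf_meta_ggd} to exclude pseudo-Anosov generators.

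\textbf{Your route.} You reverse the order.
\begin{enumerate}[(1)]
\item You rule out a pseudo-Anosov $F$ directly: the stabilizer of its fixed pair in $\mathcal{PMF}$ is virtually cyclic, and you combine this with the exponent-sum homomorphism $\BS(p,q)\to\Z$. That homomorphism alone gives the contradiction; Britton's lemma is not needed.
\item You rule out a pseudo-Anosov $G$ because it would preserve the nonempty multicurve $\C(F)$.
\item You then prove $|p|=|q|$ only for reducible $F$.
\end{enumerate}

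\textbf{What your route buys.} First, (ii) no longer depends on (i) or on the external theorem. Second, your Step 4 is more complete than the paper's treatment of the reducible case. The paper's comparison of multitwist components gives nothing when the multitwist component is trivial, for instance when $F^n$ is a product of partial pseudo-Anosovs with no twisting. It also leaves implicit the cycle argument needed to pass from $pn_i=qn_{\tau(i)}$ to $|p|=|q|$. Your split covers both situations: case (a) uses stretch factors along a $G$-orbit of pseudo-Anosov pieces, and case (b) uses twist exponents along a $G$-orbit of curves. The paper's route for (ii) is shorter, but only because it outsources the key step to the earlier paper.

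\textbf{One correction.} Your preliminary claims that $F$ and $G$ do not commute and that $\langle F,G\rangle$ is not virtually abelian are false for $|p|=|q|=1$, which the statement allows. Indeed $\BS(1,1)\cong\Z^2$, and $\BS(1,-1)$ is the Klein bottle group, which is virtually $\Z^2$. You never use these claims, so simply delete them; the fact you actually need, that no nonzero power of $G$ lies in $\langle F\rangle$, holds for all $p,q$ by the exponent-sum homomorphism.
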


\begin{proof}
Since $\BS(p,q)$ is torsion-free, $F$ and $G$ are not periodic. Without loss of generality, we assume that $p>0$. First, assume that $F$ is pseudo-Anosov with the stretch factor $\lambda>1$. We observe that the stretch factor of $F^p$ and its conjugate $GF^pG^{-1}$ is $\lambda^p$. The stretch factor of $F^q$ is $\lambda^q$ when $q>0$ and $\lambda^{-q}$ when $q<0$. Since $GF^pG^{-1}=F^q$, we have $\lambda^p=\lambda^q$ when $q>0$ and $\lambda^p=\lambda^{-q}$ when $q<0$. It follows that either $p=q$ or $p=-q$, that is, $|p|=|q|$.

Now, assume that $F$ is a reducible mapping class of infinite order with multitwist component $T_{c_1}^{n_1}T_{c_2}^{n_2}\cdots T_{c_k}^{n_k}$. Comparing the multitwist component of $GF^pG^{-1}$ and $F^q$, we get
\[
T_{G(c_1)}^{pn_1}T_{G(c_2)}^{pn_2}\cdots T_{G(c_k)}^{pn_k}=T_{c_1}^{qn_1}T_{c_2}^{qn_2}\cdots T_{c_k}^{qn_k}.
\]
Depending on signs of $n_i$'s, it follows that either $p=q$ or $p=-q$, that is, $|p|=|q|$.

We observe that the subgroup $\langle F^p, G^2\rangle$ of $\BS(p,\pm p)$ is isomorphic to $\Z^2$. It follows from~\cite[Theorem 3.1]{inf_meta_ggd} that $F$ and $G$ cannot be pseudo-Anosov mapping classes. Thus, $F$ and $G$ are reducible mapping classes of infinite order.
\end{proof}
    
\begin{theorem}
\label{thm:main_theorem}
For $g\geq 2$, let $F,G \in \map(S_g)$ be nontrivial reducible mapping classes of infinite order. Let
\begin{center}
$T_{c_1}^{n_1}T_{c_2}^{n_2}\cdots T_{c_k}^{n_k}$ and $T_{d_1}^{m_1}T_{d_2}^{m_2}\cdots T_{d_{\ell}}^{m_{\ell}}$
\end{center}
be the multitwist component of $F$ and $G$, $\C(F)=\{c_1,c_2,\dots,c_k\}$ and $\C(G)=\{d_1,d_2,\dots,d_{\ell}\}$ are the canonical reduction system of $F$ and $G$, respectively, where $n_i,m_j\in \Z$ for $1\leq i\leq k$ and $1\leq j\leq \ell$. For $\epsilon=\pm 1$ and $p>1$, we have
\[
\BS(p,p\epsilon)=\langle F,G \mid GF^pG^{-1}=F^{p\epsilon} \rangle
\]
if and only if the following conditions hold:
\begin{enumerate}[(i)]
\item $\mathcal{C}(F) \cup \mathcal{C}(G)$ is a multicurve.
\item $G(A_i) = B_i$, $G(B_i) = A_i$, and $F^p(C_i) = C_i$ for every $i$, where
\begin{align*}
A_i&=\{c_j\in\C(F)\mid n_j=n_i\},\\
B_i&=\{c_j\in\C(F)\mid n_j=\epsilon n_i\},\\
C_i&=\{d_j\in\C(G)\mid m_j=m_i\}.
\end{align*}
\item For every path component $R$ of $S_g(\mathcal{C}(F))$, we have
\[
\langle F_r,G_r\mid G_rF_r^pG_r^{-1}=F_r^{p\epsilon^{q_r}}, F_r^{u_r}=G_r^{v_r}=1 \rangle,
\]
where $F_r,G_r\in \map(R)$ are induced by $F,G$, $u_r,v_r\in \N\cup \{\infty\}$ are orders of $F,G$, respectively, $q_r\in \N$ is the size of the orbit of $R$ under $G$.
\item For two path components $R$ and $S$ of $S_g(\mathcal{C}(F))$ such that $G(R) = S$, we have $F_r^p$ is conjugate to $F_s^{p\epsilon}$, where $F_r \in \map(R)$ and $F_s \in \map(S)$ are induced by $F$.
\end{enumerate}
\end{theorem}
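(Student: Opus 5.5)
We prove the two directions separately. The basic tool is that canonical reduction systems are natural: $\C(HXH^{-1})=H(\C(X))$, and $\C(X^n)=\C(X)$ for $n\neq 0$. We also use that the multitwist component transforms by conjugation, so $H T_{c}^{n}H^{-1}=T_{H(c)}^{n}$.

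\emph{Necessity.} Suppose $\langle F,G\rangle\cong\BS(p,p\epsilon)$ with the given relation. Set $X=F^p$. The relation gives $GXG^{-1}=X^{\epsilon}$, so $G(\C(F))=\C(F)$. Since $G$ preserves $\C(F)$ and $\C(F)$ is canonical for $F^p$, the class $G$ and $F^p$ commute up to sign. Hence $G^2$ commutes with $F^p$, and $\langle F^p,G^2\rangle\cong\Z^2$, as observed in the proof of Proposition~\ref{prop:bspq_in_mod}.

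For (i) we use the standard fact that commuting infinite-order reducible classes have canonical reduction systems whose union is a multicurve (no curve of one intersects a curve of the other). This gives that $\C(F)\cup\C(G^2)=\C(F)\cup\C(G)$ is a multicurve.

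For (ii) we compare multitwist components in $GF^pG^{-1}=F^{p\epsilon}$, as in the proof of Proposition~\ref{prop:bspq_in_mod}. We get $T_{G(c_j)}^{pn_j}$ equal to the $\epsilon$-twisted version, so $G$ sends curves with exponent $n_i$ to curves with exponent $\epsilon n_i$. Thus $G(A_i)=B_i$, and symmetrically $G(B_i)=A_i$. Next, conjugating $G$ by $F^p$ gives $F^pGF^{-p}=F^{p}F^{-p\epsilon}G=F^{p(1-\epsilon)}G$. Since $F^{p(1-\epsilon)}$ commutes with $G$ and fixes $\C(G)$ (via (i)), its multitwist component on $\C(G)$ is unchanged. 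Hence $F^p$ preserves $\C(G)$ together with the exponents, i.e.\ $F^p(C_i)=C_i$.

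For (iii) and (iv) we restrict to components of $S_g(\C(F))$. $G$ permutes the components $R$, and $G^{q_r}$ fixes $R$. Restricting the relation iteratively gives $G_rF_r^pG_r^{-1}=F_r^{p\epsilon^{q_r}}$, where $G_r$ is the restriction of $G^{q_r}$. The restriction of a single step $G\colon R\to S$ gives $F_r^p$ conjugate to $F_s^{p\epsilon}$, which is (iv). The orders $u_r,v_r$ are simply recorded. Note that (iii) as stated is a relation which holds, not an isomorphism claim, since $F_r$ may be trivial or pseudo-Anosov.

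\emph{Sufficiency.} Assume (i)--(iv); we must show the relation holds and that there are no further relations.

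For the relation we aim to check $GF^pG^{-1}F^{-p\epsilon}=1$. By (i) and (ii), this element is supported away from $\C(F)$ up to twists about $\C(F)$. The twist exponents cancel by (ii). On each component it is trivial by (iii) and (iv), once the mapping classes are compared via $S_g(\C(F))$. Passing from the cut surface back to $S_g$, the kernel of $\map(S_g,\C(F))\to\map(S_g(\C(F)))$ is generated by twists about $\C(F)$. Exponent matching (ii) then gives the identity.

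For faithfulness we use the normal form for $\BS(p,p\epsilon)$. It is an HNN extension of $\langle F\rangle$ along $\langle F^p\rangle$. By Britton's lemma, a reduced word $w=F^{a_0}G^{e_1}F^{a_1}\cdots G^{e_s}F^{a_s}$ with some $e_i\neq0$ (and pinches absent) is nontrivial. We show its image is nontrivial by tracking its action on $\C(F)\cup\C(G)$ and the twist exponents. The plan is to consider the quotient $\langle F,G\rangle\to\langle F,G\rangle/\langle\langle F^p\rangle\rangle$, which in $\BS$ is $\Z/p*\Z$. We then show that the induced action on $\C(G)$ and on the components, combined with the infinite order of $G$ and $F$, realizes enough of this quotient. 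In particular, the image of $\langle F^p\rangle$ must be the full center-like normal subgroup, and elements outside it must act nontrivially.

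This faithfulness step is the main obstacle. The first attempt will be a ping-pong-type argument on the Bass--Serre tree: build an action of $\langle F,G\rangle$ on a tree from the $F^p$-invariant decomposition of $S_g$ along $\C(G)\cup\C(F)$, whose edge stabilizers are $\langle F^p\rangle$. Injectivity then follows from the structure theorem for groups acting on trees. Condition (ii) on $C_i$ (that $F^p$ preserves exponent classes of $\C(G)$) is exactly what should make $F^p$ act as an edge stabilizer.
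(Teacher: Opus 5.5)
\textbf{The gap is in the converse, at the faithfulness step, and it cannot be closed.} What you give there is a plan rather than an argument. More importantly, conditions (i)--(iv) do not imply faithfulness at all. Take disjoint, non-isotopic nonseparating curves $c,d$ with $S_g\setminus(c\cup d)$ connected, and set $F=T_c$, $G=T_d$, $\epsilon=1$. Check the conditions:
\begin{enumerate}[(i)]
\item $\{c,d\}$ is a multicurve.
\item $A_1=B_1=\{c\}$ is fixed by $G$, and $C_1=\{d\}$ is fixed by $F^p$.
\item $S_g(\{c\})$ is connected, with $F_r=1$, $G_r=T_d$, $q_r=1$, $u_r=1$, $v_r=\infty$. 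The presentation in (iii) is therefore that of $\Z$, which is exactly $\langle F_r,G_r\rangle$.
\item This holds trivially, since there is only one component.
\end{enumerate}
Yet $\langle T_c,T_d\rangle\cong\Z^2$, whereas $FGF^{-1}G^{-1}$ is a reduced, hence nontrivial, word in $\BS(p,p)$ by Britton's lemma, since $p>1$. So the ``if'' direction is false as stated.

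The paper's own proof of this direction is a single sentence asserting that the local presentations in (iii) assemble into $\BS(p,p\epsilon)$, so it does not supply the missing step either. Your idea of passing to the quotient by $\langle F^p\rangle$ is the right one, but freeness there has to be a hypothesis, not a consequence. Since $\langle F^p\rangle\triangleleft\BS(p,p\epsilon)$ with quotient $\Z_p\ast\Z$, the natural map $\BS(p,p\epsilon)\to\langle F,G\rangle$ is an isomorphism exactly when:
\begin{enumerate}[(a)]
\item the relation holds,
\item $F$ has infinite order, and
\item $F,G$ induce an isomorphism $\Z_p\ast\Z\to\langle F,G\rangle/\langle F^p\rangle$.
\end{enumerate}
This is what Min's theorem provides in Example~\ref{exmp1}: restriction to $S_g(\{c\})$ kills $F^p=T_c^p$ and maps $\langle F,G\rangle$ onto $\langle F',G\rangle\cong\Z\ast\Z_p$.

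\textbf{Two smaller problems.} First, your verification of the relation in the converse is incomplete. Condition (iv) only says that $F_r^p$ and $F_s^{p\epsilon}$ are conjugate under \emph{some} identification of $R$ with $S$, not under the map induced by $G$. Condition (iii) only constrains $G^{q_r}$. So ``trivial on each component by (iii) and (iv)'' does not establish $GF^pG^{-1}=F^{p\epsilon}$ on components that $G$ moves.

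Second, your necessity direction otherwise follows the paper. The paper obtains (i)--(ii) by applying Theorem 3.12 of the infinite metacyclic paper to $\langle G^2,F^p\rangle\cong\Z^2$, and (iii)--(iv) by restricting the relation. However, your derivation of $F^p(C_i)=C_i$ is wrong for $\epsilon=-1$. In that case $F^{p(1-\epsilon)}=F^{2p}$ and $GF^{2p}G^{-1}=F^{-2p}$, so $F^{2p}$ does not commute with $G$. Argue instead that $F^p$ commutes with $G^2$, hence preserves $\C(G^2)=\C(G)$ together with the twist exponents of $G^2$. These are the exponents of $G$ scaled by a common positive factor, so each level set $C_i$ is preserved.
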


\begin{proof}
For $\epsilon=\pm 1$ and $p>1$, assume that
\[
\BS(p,\epsilon p)=\langle F,G \mid GF^pG^{-1}=F^{\epsilon p} \rangle.
\]
Since $\langle G^2,F^p\rangle \cong \Z^2$, conditions~(i)-(ii) follow from~\cite[Theorem 3.12]{inf_meta_ggd}. Since $G(\C(F))=\C(F)$, $\C(F)$ is the canonical reduction system for $\BS(p,\epsilon p)$. Let $R$ be a path component of $S_g(\C(F))$ of orbit size $q_r$. For $F_r:=F^{q_r}_{|_R},G_r:=G^{q_r}_{|_R}\in \map(R)$, restricting the relation $G^{q_r}F^{q_rp}G^{-q_r}=F^{q_rp\epsilon^{q_r}}$ in $\map(R)$, we obtain~(iii) (note that if either $F_r$ or $G_r$ is of infinite order, we have $u_r$ or $v_r$ equals $\infty$). Let $R$ and $S$ be two path components of $S_g(\C(F))$ such that $G(R)=S$. Since $G(R)=S$, restricting the relation $GF^{q_rp}G^{-1}=F^{q_rp\epsilon}$ to $S$, we get that $F_r^p$ is conjugate to $F_s^{p\epsilon}$.

Conversely, assume that $F$ and $G$ satisfy $(i)-(iv)$. Since the groups generated by the induced maps of $F$ and $G$ on each path component $S_g(\C(F))$ satisfy appropriate presentations (as in $(iii)$), it follows that $F$ and $G$ present the Baumslag-Solitar group $\BS(p,p\epsilon)$.  
\end{proof}

The construction in Example~\ref{exmp1} is due to Kent-Leininger (see~\cite[page 21]{leininger_survey}).
\begin{exmp}
\label{exmp1}
For $g\geq 2$, let $c$ be an essential simple closed curve on $S_g$. Let $F'\in \map(S_g)$ be a periodic mapping class of order $p$ and $G$ be a partial pseudo-Anosov both supported on $S_g (\{c\})$. Due to Min~\cite[Corollary 4.3]{min}, we can assume that the subgroup $\langle G,F'\rangle\cong\Z\ast\Z_p$. For $F=F'T_c$, we have that
\[
GF^pG^{-1}=GT_c^pG^{-1}=T_{G(c)}^p=T_c^p=F^p.
\]
By Theorem~\ref{thm:main_theorem}, it follows that $\langle G,F\rangle \cong \BS(p,p)$.
\end{exmp}

We provide a similar construction as in Example~\ref{exmp1} to construct a subgroup of $\map(S_g)$ isomorphic to $\BS(p,-p)$.

\begin{exmp}
\label{exmp2}
Let $c_1$ and $c_2$ be essential simple closed curves as in Figure~\ref{fig:leininger2}. Let $F_1$ be a periodic mapping class of order $p$ and $G'$ be a partial pseudo-Anosov both supported on $S_g(\{c_1,c_2\})$ and preserve curves $c_1$ and $c_2$. Let $F_2$ be an involution represented by $\pi$-rotation of $S_g$ such that $F_2(c_1)=c_2$ and $F_2(c_2)=c_1$ (see Figure~\ref{fig:leininger2}). Due to Min~\cite[Corollary 4.3]{min}, we can assume that $\langle F_2G',F_1\rangle\cong\Z\ast\Z_p$. For $G=F_2G'$ and $F=F_1T_{c_1}T_{c_2}^{-1}$, we observe that $F(c_i)=c_i$, $G(c_1)=c_2$, and $G(c_2)=c_1$, where $i=1,2$. Therefore, we have that
\[
GF^pG^{-1}=GT_{c_1}^pT_{c_2}^{-p}G^{-1}=T_{G(c_1)}^pT_{G(c_2)}^{-p}=T_{c_2}^pT_{c_1}^{-p}=F^{-p}.
\]
By Theorem~\ref{thm:main_theorem}, it follows that $\langle G,F \rangle\cong \BS(p,-p)$.
\begin{figure}[htbp]
\centering
\begin{subfigure}[t]{0.45\textwidth}
\centering
\includegraphics[width=\linewidth]{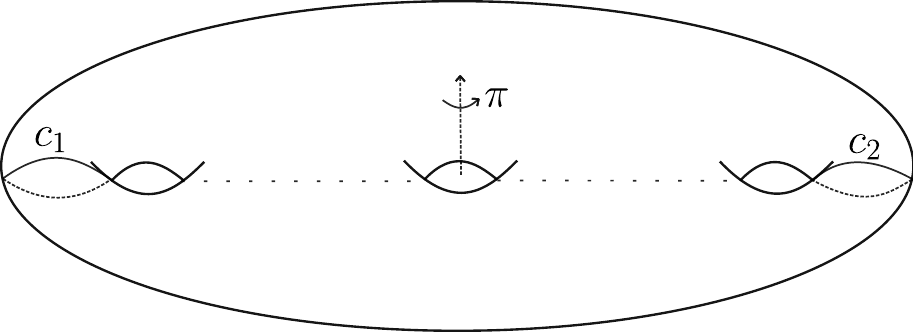}
\caption{When $g$ is odd.}
\end{subfigure}
\hfill
\begin{subfigure}[t]{0.45\textwidth}
\centering
\includegraphics[width=\linewidth]{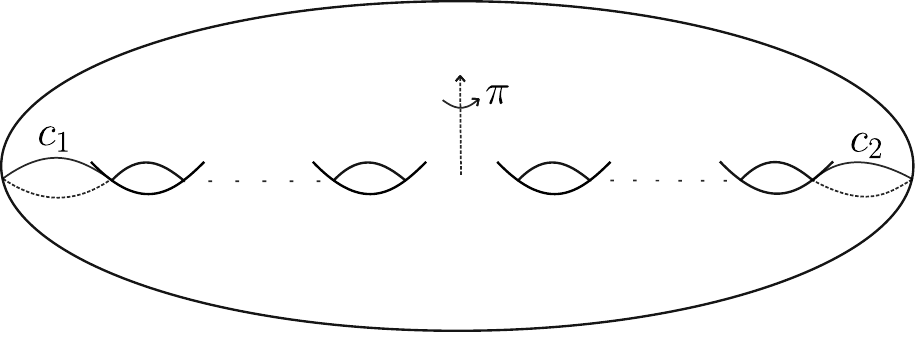}
\caption{When $g$ is even.}
\end{subfigure}
\caption{A pair of simple closed curves on $S_g$ permuted by an involution.}
\label{fig:leininger2}
\end{figure}
\end{exmp}

For isotopy classes of two essential simple closed curves $a$ and $b$, the \textit{geometric intersection number} $i(a,b)$ between $a$ and $b$ is the minimal transverse intersection between any representatives of $a$ and $b$. For our next example, we need the following result. 

\begin{theorem}[{Humphries~\cite[Theorem 1.1]{humphries_gms_1989}}]
\label{thm:humphries}
For $g\geq 2$, let $C=\{c_1,c_2,\dots,c_k\}$ be a collection of distinct essential simple closed curves on $S_g$. If $i(c_i,c_j)>1$ for all $i \neq j$ and no component of $S_g \setminus C$ is a disc, then $\langle T_{c_1},T_{c_2},\dots,T_{c_k} \rangle$ is a free group of rank $k$.
\end{theorem}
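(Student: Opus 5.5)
We would prove Humphries' theorem with a ping-pong argument on the space of measured foliations, or more concretely on isotopy classes of simple closed curves, using the standard intersection estimate for Dehn twists. The key tool is the inequality (see \cite[Proposition 3.4]{primer}): for simple closed curves $a,b$ and any $n\in\Z$,
\[
|i(T_a^n(b),c) - |n|\, i(a,b)\, i(a,c)| \leq i(b,c).
\]
For each $j$, let $X_j$ be the set of curves (or measured laminations) $x$ with $i(x,c_j) < i(x,c_m)$ for all $m\neq j$, suitably weighted. We aim to show that $T_{c_j}^n(X_m)\subseteq X_j$ for every $m\neq j$ and every $n\neq 0$. This is the ping-pong configuration, and the ping-pong lemma then gives freeness of rank $k$.

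The verification runs as follows. Take $x\in X_m$ with $m\neq j$. The inequality gives $i(T_{c_j}^n x, c_j)=i(x,c_j)$. For $l\neq j$ it gives
\[
i(T_{c_j}^n x,c_l)\ge |n|\,i(c_j,x)\,i(c_j,c_l)-i(x,c_l)\ge 2\,i(x,c_j)-i(x,c_l),
\]
using $i(c_j,c_l)\ge 2$. Since $x\in X_m$, we want to conclude that this exceeds $i(x,c_j)$. That requires $i(x,c_j)>i(x,c_l)$, which does not follow directly from membership in $X_m$.

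This is where the naive domains fail, and it is the main obstacle. I would first try to fix the ping-pong sets using a fixed reference curve $\gamma$, chosen to intersect every $c_i$ and to lie in no $X_j$. I would then run the argument on the orbit of $\gamma$ via normal forms (Humphries' original approach), proving by induction on word length $w=T_{c_{j_1}}^{n_1}\cdots T_{c_{j_r}}^{n_r}$, with $j_s\neq j_{s+1}$, that
\[
i(w\gamma, c_{j_1}) < i(w\gamma, c_l)\quad\text{for all } l\neq j_1,
\]
and that $i(w\gamma,c_l)$ grows strictly with $r$. The inductive step applies the inequality with $a=c_{j_1}$, $b=w'\gamma$ and $c=c_l$. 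The hypothesis $i\ge 2$ supplies the factor $2$ needed to dominate the error term $i(w'\gamma,c_l)$ by the inductive bound. It follows that $w\gamma\neq\gamma$ for every nontrivial reduced word, so $w\neq 1$.

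The hypothesis that no component of $S_g\setminus C$ is a disc enters in two places. First, it lets us choose $\gamma$ in the base case: a curve meeting the $c_i$ whose intersection numbers are realized in minimal position. Second, it guarantees that the $c_i$ are pairwise in minimal position with no bigons, so the intersection numbers behave additively in the estimate. Finally, each $T_{c_i}$ has infinite order, and distinct $c_i$ give distinct generators, so the group is free of rank exactly $k$.
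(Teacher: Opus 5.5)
The paper does not prove this statement; it is quoted from Humphries. Your argument therefore has to stand on its own, and its inductive step does not.

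\textbf{The inductive step fails for $k\geq 3$.} Passing from ping-pong sets to the orbit of a reference curve $\gamma$ leaves the obstacle you identified untouched. Write $w=T_{c_{j_1}}^{n_1}w'$ and $x=w'\gamma$, where $w'$ begins with $T_{c_{j_2}}^{n_2}$ and $j_2\neq j_1$. Your hypothesis gives $i(x,c_{j_2})<i(x,c_l)$ for all $l\neq j_2$, in particular $i(x,c_{j_2})<i(x,c_{j_1})$. You need $i(T_{c_{j_1}}^{n_1}x,c_l)>i(x,c_{j_1})$ for every $l\neq j_1$. The twist inequality only gives $i(T_{c_{j_1}}^{n_1}x,c_l)\geq 2\,i(x,c_{j_1})-i(x,c_l)$, and this exceeds $i(x,c_{j_1})$ only when $i(x,c_l)<i(x,c_{j_1})$. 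That holds for $l=j_2$, but nothing you have controls it for $l\notin\{j_1,j_2\}$. Indeed, after a large twist about $c_{j_2}$ the ratio $i(x,c_l)/i(x,c_{j_1})$ is roughly $i(c_{j_2},c_l)/i(c_{j_2},c_{j_1})$, which can be large. So the factor $2$ does not dominate the error term.

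\textbf{The no-disc hypothesis is never used where it matters.} Your step uses only $i(c_i,c_j)\geq 2$ and the twist inequality, and the theorem is false under those hypotheses alone. Take $i(a,b)\geq 2$ and $c=T_a(b)$. Then $i(a,c)=i(a,b)\geq 2$ and $i(b,c)=i(a,b)^2\geq 4$, and the three curves can be put in pairwise minimal position with no bigons. Yet $T_c=T_aT_bT_a^{-1}$, so the reduced word $T_cT_aT_b^{-1}T_a^{-1}$ is trivial and the group is $\langle T_a,T_b\rangle\cong F_2$. This also shows that ``infinite order and distinct generators'' says nothing about the rank.

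Hence the hypothesis that no component of $S_g\setminus C$ is a disc must enter the core estimate. Choosing $\gamma$ needs no hypothesis, and excluding bigons among the $c_i$ is not enough, since the triple above has none.

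\textbf{What your scheme actually needs.} You need a lower bound like $i(T_{c_j}^{n}x,c_l)\geq i(x,c_l)+|n|\,i(x,c_j)\,i(c_j,c_l)$, or at least one whose error term is smaller than $i(x,c_j)$, for every curve $x=w'\gamma$ that occurs. With it, $i(x,c_{j_1})>i(x,c_{j_2})\geq 0$ gives $i(w\gamma,c_l)\geq i(x,c_l)+2\,i(x,c_{j_1})>i(w\gamma,c_{j_1})$, and the induction closes. Proving such a bound means showing inductively that the surgered representative of $T_{c_{j_1}}^{n_1}(w'\gamma)$ forms no bigon with any $c_l$. That requires controlling how $w\gamma$ crosses the complementary regions of $C$, which is where the no-disc hypothesis has to do real work. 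Your final paragraph asserts this ``additivity'' without argument, but it is the whole content of the theorem.
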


In the examples constructed above, the conjugating generator (i.e., $G$) of $\BS(p,p\epsilon)$ is a partial pseudo-Anosov. In the following example, both the generators are pseudo-periodic.

\begin{exmp}
\label{exmp3}
Let $a$ and $b$ be simple closed curves on $S_4$ and $H$ be a periodic mapping class of order $3$ represented by $2\pi/3$-rotation of $S_4$ as in Figure~\ref{fig:triangle}. We claim that $\langle T_a,H\rangle \cong \Z\ast \Z_3$. Suppose we have a word $w=T_{a}^{n_1}H^{m_1}T_{a}^{n_2}H^{m_2}\cdots T_{a}^{n_k}H^{m_k}\in \langle T_a,H\rangle$, where $n_i\in \Z$ and $m_k=1,2$. We have
\begin{align*}
w&=T_{a}^{n_1}H^{m_1}T_{a}^{n_2}H^{m_2}\cdots T_{a}^{n_k}H^{m_k}\\
&=T_{a}^{n_1}T_{H^{m_1}(a)}^{n_2}H^{m_1+m_2}T_{a}^{n_3}H^{m_3}\cdots T_{a}^{n_k}H^{m_k}\\
&=T_{a}^{n_1}T_{H^{m_1}(a)}^{n_2}T_{H^{m_1+m_2}(a)}^{n_3}H^{m_1+m_2+m_3}\cdots T_{a}^{n_k}H^{m_k}\\
&=\cdots\\
&=T_{a}^{n_1}T_{H^{m_1}(a)}^{n_2}T_{H^{m_1+m_2}(a)}^{n_3}\cdots T_{H^{\sum_{i=1}^{k-1}m_i}(a)}^{n_k}H^{\sum_{i=1}^k m_i}.
\end{align*}
By Theorem~\ref{thm:humphries}, since no path component of $S_4(\{a,H(a),H^2(a)\})$ is a disc, it follows that $\langle T_a,T_{H(a)},T_{H^2(a)}\rangle$ is a free group of rank $3$. The claim follows as $w\neq 1$. For $G=T_a$ and $F=HT_b$, since
\[
GF^3G^{-1}=GT_b^3G^{-1}=T_{G(b)}^3=T_b^3=F^3,
\]
it follows from Theorem~\ref{thm:main_theorem} that $\langle G, F\rangle\cong \BS(3,3)$.
\begin{figure}[htbp]
\centering
\includegraphics[width=.38\textwidth]{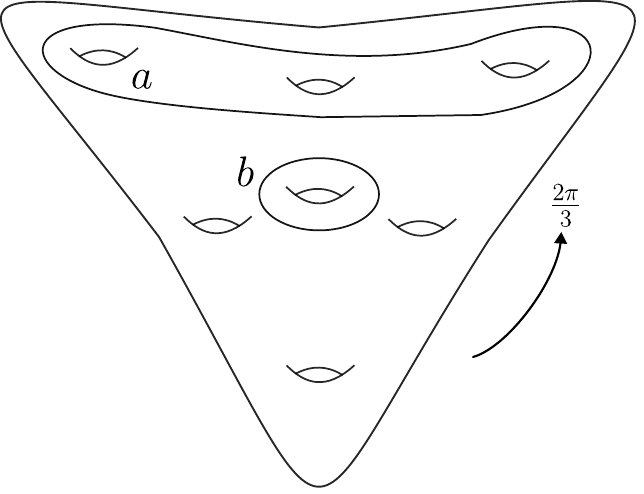}
\caption{A collection $\{a,H(a),H^2(a)\}$ of simple closed curves whose Dehn twists generate a free group.}
\label{fig:triangle}
\end{figure}
\end{exmp}

In the following example, $G$ will be pseudo-periodic, and $F$ will be a partial pseudo-Anosov.

\begin{exmp}
\label{exmp4}
Consider the curves on $S_6$ and let $H$ be the involution as shown in Figure~\ref{fig4}. Let $F=T_{a_1}T_{b_1}^{-1}H$ and $G=T_cH$. We note that $F$ is a partial pseudo-Anosov mapping class. We have $F^2=T_{a_1}T_{b_1}^{-1}HT_{a_1}T_{b_1}^{-1}H^{-1}=T_{a_1}T_{b_1}^{-1}T_{a_2}T_{b_2}^{-1}$ and $GF^2G^{-1}=F^2$. By Theorem~\ref{thm:humphries}, $\langle T_c,T_{H(c)}\rangle$ is a free group of rank $2$. Let $R$ be the path component of $S_6(\C(F))$ of genus $4$. A similar argument as in Example~\ref{exmp3} implies that $\langle F_r,G_r\rangle=\langle T_c,H\rangle\cong \Z\ast \Z_2$. Thus, by Theorem~\ref{thm:main_theorem} $\langle G,F\rangle\cong \BS(2,2)$.
\begin{figure}[htbp]
\centering
\includegraphics[width=.65\textwidth]{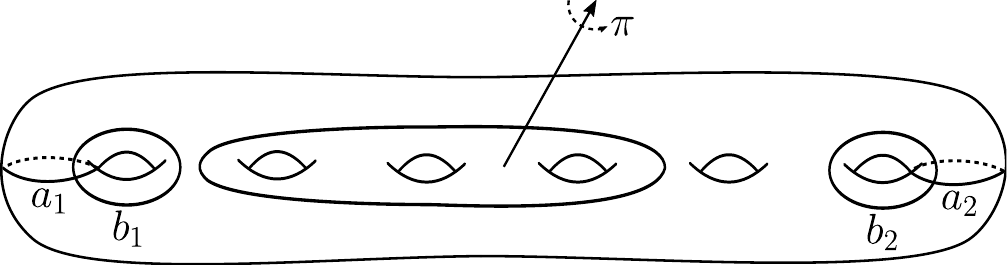}
\caption{A collection of curves on $S_6$.}
\label{fig4}
\end{figure}
\end{exmp}

\section{Towards the generalized Nielsen realization problem}
\label{sec:gnr_bsmn}
In this section, we show that infinite metacyclic and Baumslag-Solitar subgroups of $\map(S_g)$ lift to an isomorphic subgroup of $\Diff(S_g)$. Before proving the main results of this section, we state some results from the literature that will be used in our arguments.

\begin{theorem}[{Bonahon~\cite[Proposition 4.3]{bonahon_cobordism_1983}}]
\label{thm:bonahon_flp}
For $g\geq 2$, let $\F,\G\in \Diff(S_g)$ be either periodic or pseudo-Anosov diffeomorphisms such that $\F$ is isotopic to $\G$. Then there exists $\h\in \Diff(S_g)$ isotopic to the identity such that $\F=\h\G\h^{-1}$.
\end{theorem}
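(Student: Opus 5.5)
The statement is Bonahon's Proposition 4.3, cited from the literature, so the plan is to reconstruct its standard proof, treating the periodic and pseudo-Anosov cases separately. In both cases the idea is the same: show that $\F$ and $\G$ are conjugate by some diffeomorphism, and then arrange that this conjugator is isotopic to the identity.

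\emph{Periodic case.} Suppose $\F$ and $\G$ have finite order $n$ and are isotopic. I will proceed as follows.
\begin{enumerate}[(1)]
\item Average a metric over $\langle\F\rangle$ and uniformize. This makes $\F$ an isometry of a hyperbolic metric $\sigma_1$; similarly $\G$ is an isometry of a hyperbolic metric $\sigma_2$.
\item Both metrics are then fixed points of the same mapping class $[\F]=[\G]$ acting on Teichm\"uller space $\mathcal T(S_g)$. The fixed set of a finite-order mapping class is the Teichm\"uller space of the quotient orbifold, which is connected (indeed a cell).
\item Hence there is a path of marked hyperbolic structures $\sigma_t$, all fixed by $[\F]$. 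On each $(S_g,\sigma_t)$ the class $[\F]$ has a unique isometric representative $\F_t$, because isometries isotopic to the identity on a closed hyperbolic surface are trivial.
\item The markings give diffeomorphisms $\h_t$ isotopic to the identity with $\h_t^*\sigma_t$ varying continuously. By uniqueness of the isometric representative, $\h_1\G\h_1^{-1}$ is isotopic to $\F$ and is a $\sigma$-isometry, so it equals $\F$.
\end{enumerate}
More concretely, I will take $\h$ to be the identity-isotopic diffeomorphism realizing the isometry $(S_g,\sigma_2)\to(S_g,\sigma_1)$ in the fixed marking. This exists because $\sigma_1$ and $\sigma_2$ are both $[\F]$-fixed and may be joined inside the fixed locus. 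An alternative route is through the equivariant classification of cyclic actions: data signature and rotation numbers are determined by the conjugacy class, and the conjugator can be corrected by the centralizer.

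\emph{Pseudo-Anosov case.} I will use uniqueness of the invariant foliation pair $(\mathcal F^s,\mathcal F^u)$ up to isotopy and Whitehead moves, together with uniqueness of the transverse measures up to scale (Thurston--Fathi--Laudenbach--Poenaru).
\begin{enumerate}[(1)]
\item Since $\F$ and $\G$ are isotopic, their measured foliations are equivalent in $\mathcal{MF}$. They have no saddle connections for a pA map, so the foliations are isotopic: there is $\h_0$ isotopic to the identity carrying the foliation pair of $\G$ to that of $\F$.
\item Then $\F$ and $\h_0\G\h_0^{-1}$ preserve the same measured foliation pair and stretch by the same $\lambda$. So $\F^{-1}\h_0\G\h_0^{-1}$ preserves both measures and is isotopic to the identity.
\item An isotopically trivial map preserving a transverse pair of measured foliations is a translation of the flat structure. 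The remaining work is to show that, after adjusting $\h_0$ by a flat-structure-preserving isotopy (sliding along leaves), such a translation becomes trivial.
\end{enumerate}

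\emph{Main obstacle.} I expect the main difficulty to be the smoothness issue: pseudo-Anosov maps are only smooth away from the singularities. I would follow Bonahon's convention that the maps are smooth in the appropriate singular sense, so that the conjugacy argument goes through in that category.
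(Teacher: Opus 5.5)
The paper gives no proof of this statement; it imports it from Bonahon as a black box. Your reconstruction therefore has to stand on its own, and as written it does not: in each case the decisive step is either false or only asserted.

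\emph{Periodic case.} Your ``concrete'' choice of $\h$, an identity-isotopic isometry $(S_g,\sigma_2)\to(S_g,\sigma_1)$, exists only when $\sigma_1$ and $\sigma_2$ define the same point of $\mathcal T(S_g)$. Joining them by a path in the fixed locus of $[\F]$ produces no isometry, yet step (4) silently assumes one. To make the path version work you would need two things you do not supply:
\begin{enumerate}[(a)]
\item the isometric representatives $\F_t$ vary continuously in $t$;
\item a continuous path of $\Z_n$-actions is conjugate by an isotopy (an equivariant stability statement).
\end{enumerate}
The centralizer alternative is circular. Surjectivity of the centralizer of $\G$ in $\Diff(S_g)$ onto the centralizer of $[\G]$ in $\map(S_g)$ is essentially the statement itself.

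The missing idea is Teichm\"uller's uniqueness theorem. Let $\h:(S_g,\sigma_2)\to(S_g,\sigma_1)$ be the extremal quasiconformal map isotopic to the identity. Then $\F^{-1}\h\G$ is also isotopic to the identity, because $[\F]=[\G]$. It has the same dilatation as $\h$, because $\F$ and $\G$ are conformal. Uniqueness forces $\F^{-1}\h\G=\h$, i.e.\ $\F=\h\G\h^{-1}$, and no path is needed.

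\emph{Pseudo-Anosov case.} Isotoping each invariant foliation separately is fine, since there are no saddle connections. But step (1) asserts a \emph{single} $\h_0$ carrying both the stable and the unstable foliation of $\G$ to those of $\F$. That simultaneity is the real content of the uniqueness theorem. After matching $\mathcal F^s$, you need an isotopy preserving $\mathcal F^s$ that carries one transverse unstable foliation to the other. Alternatively, invoke Hubbard--Masur/Gardiner--Masur uniqueness of a quadratic differential with prescribed horizontal and vertical classes.

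Step (3) is misdirected: no sliding along leaves is needed. A map preserving both measured foliations is an isometry of the singular flat metric. That isometry group is finite for $g\ge 2$, and a finite-order map isotopic to the identity is the identity.

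A cleaner route that avoids both issues is Handel's global shadowing.
\begin{enumerate}[(a)]
\item Choose lifts of $\F$ and $\G$ at bounded distance.
\item Build semiconjugacies $h,k$ homotopic to the identity with $h\G=\F h$ and $k\F=\G k$.
\item The lifts of $hk$ and $kh$ commute with the lifts of $\F$ and $\G$ respectively, and have bounded displacement.
\item Expansivity of these pseudo-Anosov lifts then makes $hk$ and $kh$ the identity, so $h$ is a homeomorphism homotopic, hence isotopic, to the identity.
\end{enumerate}

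Finally, smoothness at the singularities is not the real obstacle. The paper's $\Diff(S_g)$ is by definition a group of homeomorphisms.
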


\begin{theorem}[{Chilina~\cite[Theorem 1]{chilina_2025}}]
\label{thm:chilina}
For $g\geq 2$, let $\F\in \Diff(S_g)$ be a pseudo-Anosov diffeomorphism. For $\G\in \Diff(S_g)$, if $\G\F=\F\G$ then $\G$ is either periodic or pseudo-Anosov. 
\end{theorem}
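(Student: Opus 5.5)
The plan is to compare the Nielsen--Thurston normal forms of $\F$ and $\G$, after first putting both in standard form (periodic diffeomorphisms as isometries for some hyperbolic metric, pseudo-Anosov ones preserving a pair of transverse measured singular foliations $(\mathcal{F}^s,\mu^s)$, $(\mathcal{F}^u,\mu^u)$). Throughout I read Chilina's statement as concerning diffeomorphisms in these standard forms. Since $\G\F=\F\G$, the diffeomorphism $\G$ conjugates $\F$ to itself.

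First I will show that $\G$ permutes the invariant structure of $\F$. Because $\G\F\G^{-1}=\F$, the image $\G(\mathcal{F}^s,\mu^s)$ is again a measured foliation invariant under $\F$ and contracted by the same factor $\lambda^{-1}$. Uniqueness of the invariant measured foliations of a pseudo-Anosov map (up to scaling of the measure) then gives $\G(\mathcal{F}^s)=\mathcal{F}^s$, $\G(\mathcal{F}^u)=\mathcal{F}^u$, and $\G^*\mu^s=a\mu^s$, $\G^*\mu^u=b\mu^u$ for some constants $a,b>0$. The singular set of $\mathcal{F}^s$ is finite and $\G$-invariant, and $\G$ maps leaves to leaves. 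The product $\mu^s\mu^u$ is the area form of the singular flat metric and has finite total mass. Since $\G$ preserves the surface and hence the total area, we get $ab=1$.

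Next I split into two cases. If $a=1$, then $\G$ is an isometry of the singular flat metric $|\mu^s|+|\mu^u|$. The isometry group of a compact singular flat surface of genus $\geq 2$ is finite, because an isometry is determined by the image of a regular point and a direction at it, and it must permute the finitely many singularities and the saddle connections. Hence $\G$ is periodic. If $a\neq 1$, say $a<1$ after possibly replacing $\G$ by $\G^{-1}$, then $\G$ preserves both foliations, contracting one by $a$ and expanding the other by $a^{-1}$. This is exactly the definition of a pseudo-Anosov diffeomorphism with stretch factor $a^{-1}$.

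The main obstacle is the first step: showing that an arbitrary smooth $\G$ commuting with $\F$ literally carries the singular foliations of $\F$ to themselves, rather than merely up to isotopy and Whitehead moves. I would first try a dynamical argument. The stable leaf of a point $x$ is characterized as the set of $y$ with $d(\F^n x,\F^n y)\to 0$. Commutation gives $\G\F^n=\F^n\G$, and $\G$ is uniformly continuous on the compact surface. Together these show that $\G$ maps stable sets to stable sets, and likewise unstable sets to unstable sets. The transverse measures are then recovered from uniqueness of the $\F$-invariant transverse measure (unique ergodicity of the foliations), which gives the scalars $a$ and $b$.
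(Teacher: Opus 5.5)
The paper gives no proof of this statement. It quotes Chilina's Theorem 1 and uses it as a black box, in the proof of Theorem~\ref{thm:nr_inf_meta}, applied to $\G^2$. So there is no route to compare against. Judged on its own, your outline is a correct proof, with two adjustments.

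First, drop the opening idea of putting $\G$ into standard form. Isotoping $\G$ would destroy $\G\F=\F\G$, and the conclusion is about $\G$ itself. Your actual argument never uses this step.

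Second, as you flag yourself, uniqueness of the invariant measured foliations only gives $\G(\mathcal{F}^s)=\mathcal{F}^s$ in $\mathcal{MF}$, i.e.\ up to isotopy and Whitehead moves. So your dynamical argument is the heart of the proof, not an optional alternative. It does work:
\begin{enumerate}[(1)]
\item The stable set $\{y : d(\F^n x,\F^n y)\to 0\}$ does not depend on the choice of metric. Since $\G^{\pm 1}$ commute with $\F$ and are uniformly continuous, $\G$ permutes stable sets.
\item The one nontrivial input is that, for a pseudo-Anosov homeomorphism, the stable set of $x$ is exactly the leaf of $\mathcal{F}^s$ through $x$, possibly singular with all prongs included. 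This is the standard consequence of expansiveness and local product structure near the prongs. It is worth saying so explicitly rather than calling it a characterization.
\item Unique ergodicity of the two foliations then gives the scalars $a,b$.
\item Comparing the $\mu^s\mu^u$-areas of rectangles with those of their $\G$-images gives $ab=1$.
\item For $a=b=1$, the map in natural coordinates has the form $(x,y)\mapsto(\pm x+c_1,\pm y+c_2)$, with equal signs. Since $g\geq 2$ forces singularities to exist, such a map is determined by where it sends one prong at one singular point. There are finitely many such choices, so $\G$ has finite order.
\end{enumerate}

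Compared with the bare citation, your route makes the inputs explicit: expansiveness, unique ergodicity, and rigidity of flat automorphisms. It also gives a sharper conclusion. $\G$ literally preserves the pair of foliations of $\F$, so it is either a pseudo-Anosov map with the same (possibly interchanged) foliations, or a finite-order automorphism of the half-translation structure of $\F$. That is exactly the finite-order conclusion the paper needs when it applies the result to $\G^2$.
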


We first prove the generalized Nielsen realization for the infinite metacyclic subgroups of $\map(S_g)$.
   
\begin{theorem}
\label{thm:nr_inf_meta}
For $g\geq 2$ and $G,F \in \map(S_g)$, let $\langle G,F \rangle$ be a non-cyclic infinite metacyclic subgroup of $\map(S_g)$ with $\langle F \rangle \triangleleft \langle G,F\rangle$. Then, there exist $\G,\F\in \Diff(S_g)$ such that the natural projection map $\pi:\Diff(S_g)\to\map(S_g)$ induces the isomorphism $\langle \G,\F\rangle \cong \langle G,F\rangle$.
\end{theorem}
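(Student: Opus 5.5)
We plan a case analysis on the Nielsen–Thurston type of $F$ and $G$, using the structure of non-cyclic infinite metacyclic subgroups from~\cite{inf_meta_ggd}. Since $\langle F\rangle\triangleleft\langle G,F\rangle$, we have $GFG^{-1}=F^{\pm1}$ whenever $F$ has infinite order, because an automorphism of $\Z$ is $\pm 1$. If $F$ has finite order $n$, then $GFG^{-1}=F^{r}$ for some $r$ with $\gcd(r,n)=1$, and some power $G^{m}$ commutes with $F$. A presentation of the group has the form $\langle G,F\mid F^{n}=1,\ GFG^{-1}=F^{r},\ G^{m}=F^{s}\rangle$, with $n$ or $m$ possibly infinite. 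To realize the group in $\Diff(S_g)$, it suffices to find representatives $\F,\G$ that satisfy the defining relations on the nose and project onto $F,G$. The projection is then a surjection of a quotient of the abstract group onto $\langle G,F\rangle$, hence an isomorphism.

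\textbf{Case 1: $F$ periodic.} By Kerckhoff (Nielsen realization), $F$ is realized by an isometry $\F$ of some hyperbolic metric. The element $G$ normalizes $\langle F\rangle$, so $G$ lies in the normalizer of a finite group. We then pass to the quotient orbifold $S_g/\langle\F\rangle$: $G$ induces a mapping class of this orbifold, and mapping classes commuting with the finite group correspond to the orbifold mapping class group (Birman–Hilden type correspondence). We choose a diffeomorphism $\G$ in the class of $G$ that normalizes $\langle\F\rangle$ with $\G\F\G^{-1}=\F^{r}$; this is possible because a lift of an orbifold diffeomorphism conjugates the deck action correctly up to isotopy, and the finite deck group is rigid. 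If moreover $G^{m}=F^{s}$ with $m$ finite, the group $\langle G,F\rangle$ is finite and Kerckhoff applies directly. Otherwise $G$ has infinite order and the only relations are those of $\Z_n\rtimes\Z$; these hold on the nose once $\G$ normalizes $\langle\F\rangle$ exactly.

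\textbf{Case 2: $F$ of infinite order.} Here $GFG^{-1}=F^{\pm1}$, so $G^{2}$ commutes with $F$. If $F$ is pseudo-Anosov, we take $\F$ to be a pseudo-Anosov representative. Since $G\F G^{-1}$ is isotopic to $\F^{\pm 1}$, and both are pseudo-Anosov diffeomorphisms, Theorem~\ref{thm:bonahon_flp} provides a representative $\G$ with $\G\F\G^{-1}=\F^{\pm1}$ exactly. Theorem~\ref{thm:chilina}, applied to $\G^{2}$, together with the standard fact that the centralizer of a pseudo-Anosov is virtually cyclic, then controls any further relation $G^{m}=F^{s}$. For this we may need to adjust $\G$ within the finite-order ambiguity, using that the symmetry group of the invariant foliations is finite and acts compatibly. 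If $F$ is reducible, we use the canonical reduction system $\C(F)$. It is $G$-invariant because $\C(GFG^{-1})=G(\C(F))$ and $\C(F^{-1})=\C(F)$. We realize $\C(F)$ by fixed geodesics and cut along them. On each component orbit we realize the restrictions by periodic or pseudo-Anosov pieces, using Case~1 and the pseudo-Anosov argument componentwise on the first-return maps. We then glue using explicit twist diffeomorphisms supported in annuli about $\C(F)$. On these annuli the relation $GT_cG^{-1}=T_{G(c)}$ can be arranged to hold exactly, by choosing $\G$ to carry the annular coordinates isometrically.

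\textbf{Main obstacle.} We expect the hardest step to be the reducible case. There the componentwise realizations must be chosen equivariantly along $G$-orbits of components: we pick a model on one component of each orbit and transport it by $\G$. The exact relation must then also hold on the first-return component, which reduces to the pseudo-Anosov or periodic case via Theorem~\ref{thm:bonahon_flp}. Finally, the annular twist models must be made compatible with the possible orientation-reversing action of $G$ on the annuli when $GFG^{-1}=F^{-1}$.
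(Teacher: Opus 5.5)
Your outline is sound, and in the pseudo-Anosov case it is the paper's argument. You take a pseudo-Anosov representative $\F$, use Theorem~\ref{thm:bonahon_flp} to make $\G\F\G^{-1}=\F^{\epsilon}$ hold exactly, and use Theorem~\ref{thm:chilina} to control $\G$.

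Elsewhere your decomposition differs from the paper's:
\begin{itemize}
\item The paper has no periodic case. It uses \cite[Theorem 3.1]{inf_meta_ggd} to reduce the presence of a pseudo-Anosov generator to ``$F$ pseudo-Anosov, $G$ periodic''. Otherwise it cuts along $\C(F)\cup\C(G)$ and quotes \cite[Theorem 3.12]{inf_meta_ggd} for the structure of the pieces.
\item Your Case~1 handles periodic $F$ on the closed surface with no cutting, which is cleaner. Since the group is $\Z_n\rtimes\Z$, only $\G\F\G^{-1}=\F^{r}$ has to hold. This follows in one line from Theorem~\ref{thm:bonahon_flp} applied to the isotopic periodic diffeomorphisms $\G'\F\G'^{-1}$ and $\F^{r}$, where $\G'$ is any representative of $G$. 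The Birman--Hilden detour and the appeal to rigidity of the deck group are unnecessary.
\item Case~1 also covers periodic $F$ with pseudo-Anosov $G$, which the paper's first case does not treat explicitly.
\item Cutting only along $\C(F)$ is legitimate. On every piece the normal generator restricts to a periodic or pseudo-Anosov map, so each piece is an instance of one of your two cases (on a surface with marked points). This avoids the structural citation.
\end{itemize}

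Two steps need repair.

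\textbf{Pseudo-Anosov case.} There is no ``finite-order ambiguity'' to exploit, and no adjustment is needed. Suppose $\mathcal{K}$ commutes with $\F$ and is isotopic to the identity. Theorem~\ref{thm:chilina} makes $\mathcal{K}$ periodic or pseudo-Anosov, and isotopic triviality rules out pseudo-Anosov. Theorem~\ref{thm:bonahon_flp}, comparing with the identity, then gives $\mathcal{K}=\mathrm{id}$. Two consequences:
\begin{itemize}
\item The representative $\G$ with $\G\F\G^{-1}=\F^{\epsilon}$ is unique.
\item Taking $\mathcal{K}=\G^{m}\F^{-s}$ gives $\G^{m}=\F^{s}$ on the nose. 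Here $\mathcal{K}$ commutes with $\F$ because $m$ must be even when $\epsilon=-1$.
\end{itemize}
The paper runs this only for periodic $G$ ($s=0$). Your formulation also needs $s\neq 0$, and this argument covers it.

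\textbf{Reducible case.} Realizing first-return maps of $F$ and $G$ separately and transporting by $\G$ is not enough. The stabilizer $\Gamma_R$ of a piece $R$ in $\langle G,F\rangle$ is generated by some $F^{a}$ and some $G^{b}F^{c}$. It can contain elements such as $GF$ (when $F$ and $G$ both swap $R$ with another piece) that are not products of first-return powers. Instead:
\begin{itemize}
\item Realize the whole image of $\Gamma_R$ in $\map(R)$. It is again metacyclic with normal generator $F^{a}|_R$, so your cases apply, including their finite and cyclic degenerations.
\item Induce the action to the orbit of $R$ using coset representatives $G^{i}F^{j}$.
\end{itemize}

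\textbf{Gluing.} The reassembly over the annuli around $\C(F)$ is only asserted in your proposal. The paper's proof also just asserts that the component realizations assemble, so you are not missing an ingredient the paper supplies. To carry it out:
\begin{itemize}
\item Make each piece act by rigid rotations on its boundary collars. This is automatic for isometric periodic pieces; pseudo-Anosov pieces need an equivariant modification near the punctures.
\item On the annuli, use affine maps $(\theta,t)\mapsto(\theta+\alpha+st,t)$ and $(\theta,t)\mapsto(\beta-\theta+st,1-t)$.
\end{itemize}
An orientation-preserving flip of this form conjugates a twist of slope $s$ to one of slope $s$, consistent with $GT_cG^{-1}=T_{G(c)}$. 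So the orientation reversal you worry about is harmless. What $\epsilon=-1$ does force is $n_c=0$ for every $c\in\C(F)$ with $G(c)=c$.
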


\begin{proof}
First, assume that at least one of $F$ or $G$ is pseudo-Anosov. From Theorem~\cite[Theorem 3.1]{inf_meta_ggd}, it follows that the only possible case is that $F$ is pseudo-Anosov and $G$ is periodic satisfying $GFG^{-1}=F^{\epsilon}$, where $\epsilon=\pm 1$. Let $\G',\F\in \Diff(S_g)$ be such that $\pi(\G')=G$ and $\pi(\F)=F$, where $|\G|=|G|=m$ and $\F$ is pseudo-Anosov. Since $\pi(\G'\F(\G')^{-1})=GFG^{-1}=F^{\epsilon}=\pi(\F^{\epsilon})$, we have that $\G'\F(\G')^{-1}$ is isotopic to $\F^{\epsilon}$. Since $\G'\F(\G')^{-1}$ and $\F^{\epsilon}$ are both pseudo-Anosov homeomorphisms, by Theorem~\ref{thm:bonahon_flp} we get $\F^{\epsilon}=\h\G'\F(\G')^{-1}\h^{-1}$, where $\h$ is isotopic to the identity. For $\G:=\h\G'$, we have that $\G\F\G^{-1}=\F^{\epsilon}$ and $\pi(\G)=G$. Since $\pi(\G^m)=\pi(\G)^m=G^m=1$, it follows that $\G$ cannot be pseudo-Anosov. Since $\G^2$ commutes with $\F$, by Theorem~\ref{thm:chilina} $\G$ must be of finite order. Since $\G^m$ is isotopic to the identity, it follows from Theorem~\ref{thm:bonahon_flp} that $\G^m=1$. Hence, it follows that $\langle \G,\F\rangle$ is isomorphic to $\langle G,F\rangle$ induced by $\pi$.

Now, assume that both $G$ and $F$ are reducible mapping classes. It follows from~\cite[Theorem 3.12]{inf_meta_ggd} that every path component of $S_g(C(F) \cup C(G))$ either supports a finite metacyclic group or an infinite metacyclic group with at least one pseudo-Anosov generator. Due to Kerckhoff~\cite{kerckhoff_annals_1983} and the previous paragraph, these metacyclic components are realizable as a group of homeomorphisms. Thus, the group $\langle G,F \rangle$ lifts to an isomorphic subgroup $\langle \G,\F\rangle $ of $\Diff(S_g)$.
\end{proof}

We conclude the paper by proving the generalized Nielsen realization for the Baumslag-Solitar subgroups of $\map(S_g)$.

\begin{theorem}
\label{thm:nr_bsmn}
For $g\geq 2$ and $F,G\in \map(S_g)$, let $\BS(p,p\epsilon)=\langle G, F \mid GF^pG^{-1}=F^{p\epsilon} \rangle$ be a Baumslag-Solitar subgroup of $\map(S_g)$. Then, there exist $\G,\F\in \Diff(S_g)$ such that the natural projection map $\pi:\Diff(S_g)\to\map(S_g)$ induces the isomorphism $
\langle \G,\F \mid \G\F^p\G^{-1}=\F^{p\epsilon}\rangle \cong \BS(p,p\epsilon)$.
\end{theorem}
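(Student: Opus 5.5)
By Proposition~\ref{prop:bspq_in_mod} and Theorem~\ref{thm:main_theorem}, $F$ and $G$ are reducible mapping classes of infinite order. The multicurve $\C(F)\cup\C(G)$ is invariant under both $F$ and $G$. Moreover, $\C(F)$ is the canonical reduction system of the whole group $\langle F,G\rangle$.

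The plan is to build the lift piece by piece. Outside a regular neighborhood of a suitable invariant multicurve, we realize the induced actions on the complementary subsurfaces by diffeomorphisms, using the component data in condition~(iii) of Theorem~\ref{thm:main_theorem}. On the annuli we then glue in explicit twist maps. This is the same strategy as in the reducible case of Theorem~\ref{thm:nr_inf_meta}.

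\textbf{Step 1: the reference multicurve.} Set $M:=\C(F)\cup\C(G)$, which is $\langle F,G\rangle$-invariant by conditions~(i)--(ii). Let $N$ be a closed regular neighborhood of $M$, a union of annuli. Choose representative diffeomorphisms $\F_0,\G_0$ of $F,G$ that preserve $N$ setwise, permute its annuli exactly as $F,G$ permute $M$, and are standard on $N$. Here standard means a product of a rigid permutation or reflection of the annuli with a twist map of the required multitwist component.

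\textbf{Step 2: the complementary components.} Let $R$ be a component of $S_g(M)$ with $G$-orbit of size $q$ and $F$-orbit of size $s$. The first-return maps on $R$ generate a subgroup of $\map(R)$ with marked points. On $R$, the relation $GF^pG^{-1}=F^{p\epsilon}$ restricts, so the first-return maps generate a quotient of a Baumslag--Solitar-type group with torsion relations, as in condition~(iii). Since $M$ contains $\C(G)$ as well, each induced map is periodic or pseudo-Anosov on $R$. Hence there are only two cases:
\begin{enumerate}[(a)]
\item both induced maps are periodic, and the group is finite;
\item $F_r$ is pseudo-Anosov and $G_r$ is periodic, with $G_rF_r^{p}G_r^{-1}=F_r^{\pm p}$.
\end{enumerate}
The commensurability relation forces $G_r$ to normalize $\langle F_r^p\rangle$, which rules out $G_r$ being pseudo-Anosov with $F_r$ pseudo-Anosov independent of it. If instead $G_r$ is pseudo-Anosov, then $F_r$ commutes with it up to sign, so $F_r^p$ is a power of a root of $G_r$. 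In that case $\langle F_r, G_r\rangle$ is virtually cyclic and is realized as in the first paragraph of the proof of Theorem~\ref{thm:nr_inf_meta}.

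In case (a) we apply Kerckhoff~\cite{kerckhoff_annals_1983} to realize the finite group by isometries. In case (b) we mimic the proof of Theorem~\ref{thm:nr_inf_meta}. Fix a pseudo-Anosov representative $\F_r$. Then use Theorem~\ref{thm:bonahon_flp} to conjugate a periodic representative of $G_r$ so that $\G_r\F_r^p\G_r^{-1}=\F_r^{\pm p}$ holds exactly. Theorem~\ref{thm:chilina}, applied to $\F_r^p$ commuting with $\G_r^2$, together with Theorem~\ref{thm:bonahon_flp}, shows that $\G_r$ has the correct finite order.

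We transport these realizations around the orbits so that the maps between distinct components of an orbit are chosen freely. The first-return maps are the only constraints, and condition~(iv) makes the choices consistent. We arrange everything near the marked points to be rotations, so that the pieces glue to the standard annulus maps of Step 1. This yields diffeomorphisms $\F,\G$ of $S_g$.

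\textbf{Step 3: checking the relation and faithfulness.} The relation $\G\F^p\G^{-1}=\F^{p\epsilon}$ holds on each complementary piece by construction. On $N$, $\F^p$ is a genuine multitwist diffeomorphism, a product of twist maps in annuli, and $\G$ maps annuli to annuli by standard maps. Condition~(ii) then shows that conjugation carries the twist of exponent $pn_i$ to the one required for $F^{p\epsilon}$. Reflections of the annuli account for the sign $\epsilon$, since twisting is invariant under orientation-preserving flips that reverse the core.

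Hence $\langle\G,\F\rangle$ is a quotient of $\BS(p,p\epsilon)$ that surjects onto $\langle G,F\rangle\cong\BS(p,p\epsilon)$ under $\pi$. The composite $\BS(p,p\epsilon)\to\langle\G,\F\rangle\to\BS(p,p\epsilon)$ is the identity on generators, so $\pi$ restricted to $\langle\G,\F\rangle$ is an isomorphism.

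\textbf{Main obstacle.} I expect the main difficulty to be the gluing in Step 2. The local realizations must agree exactly, not just up to isotopy, with the standard annulus maps near the boundary. In case (b), this means rotating the pseudo-Anosov and periodic representatives compatibly at prongs or marked points.

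I would first try to blow up the marked points to boundary circles. We can take $\F_r$ to act by a rigid rotation near each circle, since it permutes the prongs, and take $\G_r$ to act by an isometry of a cone neighborhood. The relation $\G_r\F_r^p\G_r^{-1}=\F_r^{\pm p}$ then holds near the boundary, and the pieces can be interpolated with the annular twist maps.
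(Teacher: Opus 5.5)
The gap is in Steps~1--2, before any gluing issue arises. First, $M=\C(F)\cup\C(G)$ is not $\langle F,G\rangle$-invariant in general. Conditions~(i)--(ii) of Theorem~\ref{thm:main_theorem} give $G(\C(F))=\C(F)$ and $F^p(\C(G))=\C(G)$, but not $F(\C(G))=\C(G)$. Example~\ref{exmp3} is a counterexample: there $G=T_a$, $F=HT_b$ and $\C(G)=\{a\}$, while $F(a)=H(a)$ with $i(a,H(a))>1$. So the representative $\F_0$ of your Step~1 does not exist, and $F$ induces no first-return maps on the pieces of $S_g(M)$. The only multicurve you can cut along in general is $\C(F)$, which is what the paper does.

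Second, and more seriously, your dichotomy (a)/(b) is false on the pieces of $S_g(\C(F))$. So is your claim that a pseudo-Anosov $G_r$ makes $\langle F_r,G_r\rangle$ virtually cyclic. You overlook pieces on which $F^p$ restricts to the identity; there the relation $GF^pG^{-1}=F^{p\epsilon}$ imposes no constraint at all.
\begin{itemize}
\item In Example~\ref{exmp1}, on $S_g(\{c\})$, $F_r=F'$ has order $p$, $G_r$ is pseudo-Anosov, and $\langle F_r,G_r\rangle\cong\Z*\Z_p$.
\item In Example~\ref{exmp3}, $F_r$ is induced by the rotation $H$ and $G_r$ by the Dehn twist $T_a$. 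So $G_r$ is neither periodic nor pseudo-Anosov, and the piece group is again of type $\Z*\Z_3$.
\end{itemize}
Such pieces carry the free part of $\BS(p,\pm p)$ and occur in every one of Examples~\ref{exmp1}--\ref{exmp4}. Neither Kerckhoff's theorem nor Theorem~\ref{thm:nr_inf_meta} realizes them.

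The repair is already implicit in your Step~3. That step correctly shows that only the single relation $\G\F^p\G^{-1}=\F^{p\epsilon}$ must hold exactly, so you never need to realize the whole piece group $\langle F_r,G_r\rangle$. Fix a normal-form representative $\F$: periodic or pseudo-Anosov on the pieces of $S_g(\C(F))$, and twist maps on the annuli. On pieces where $\F^p$ restricts to the identity, any representative of $G$ will do there. Elsewhere, conjugate an arbitrary representative by an isotopically trivial diffeomorphism supplied by Theorem~\ref{thm:bonahon_flp}.

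This is essentially the paper's argument. The paper uses only that $\langle G_r,F_r^p\rangle$ is metacyclic, realizes it via Theorem~\ref{thm:nr_inf_meta}, and transfers the relation to $\F_r^p$ by Bonahon. It never classifies $\langle F_r,G_r\rangle$.
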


\begin{proof}
By Theorem~\ref{thm:main_theorem}, each path component $R$ of $S_g(\C(F))$ supports a subgroup
\[
\langle F_r,G_r\mid G_rF_r^pG_r^{-1}=F_r^{p\epsilon^{q_r}}, F_r^{u_r}=G_r^{v_r}=1 \rangle,
\] 
generated by the induced maps $F_r,G_r$ of $F,G$, respectively, where $q_r$ is the size of the orbit of $R$ under $G$. Since $\langle G_r,F_r^p \rangle$ is metacyclic, Theorem~\ref{thm:nr_inf_meta} gives Nielsen-Thurston-type-preserving homeomorphisms $\h_r,\G_r'\in \Diff(R)$ such that $\pi(\G_r')=G_r$, $\pi(\h_r)=F_r^p$, and $\G_r'\h_r(\G_r')^{-1}=\h_r^{\epsilon^{q_r}}$. Let $\F_r\in \Diff(R)$ be Nielsen-Thurston-type-preserving with $\pi(\F_r)=F_r$. Since $\pi(\F_r^p)=F_r^p=\pi(\h_r)$, Theorem~\ref{thm:bonahon_flp} gives $\G_r''\in \Diff(R)$ isotopic to the identity such that $\G_r''\F_r^p(\G_r'')^{-1}=\h_r$. For $\G_r:=(\G_r'')^{-1}\G_r'\G_r''$, which is isotopic to $\G_r'$, we have $\G_r\F_r^p\G_r^{-1}=\F_r^{p\epsilon^{q_r}}$. Since we can realize each induced subgroup supported on $R$ by an isomorphic subgroup of $\Diff(R)$, it follows that $\BS(p,p\epsilon)$ can also be realized as an isomorphic subgroup of $\Diff(S_g)$.
\end{proof}

\bibliographystyle{abbrv}
\bibliography{bsmn}

@incollection {farb_problems_book,
    AUTHOR = {Farb, Benson},
     TITLE = {Some problems on mapping class groups and moduli space},
 BOOKTITLE = {Problems on mapping class groups and related topics},
    SERIES = {Proc. Sympos. Pure Math.},
    VOLUME = {74},
     PAGES = {11--55},
 PUBLISHER = {Amer. Math. Soc., Providence, RI},
      YEAR = {2006},
      ISBN = {978-0-8218-3838-9; 0-8218-3838-5},
   MRCLASS = {57M50 (20F65 30F40 37D20)},
  MRNUMBER = {2264130},
       DOI = {10.1090/pspum/074/2264130},
       URL = {https://doi.org/10.1090/pspum/074/2264130},
}

@article {mark,
    AUTHOR = {Markovic, Vladimir},
     TITLE = {Realization of the mapping class group by homeomorphisms},
   JOURNAL = {Invent. Math.},
  FJOURNAL = {Inventiones Mathematicae},
    VOLUME = {168},
      YEAR = {2007},
    NUMBER = {3},
     PAGES = {523--566},
      ISSN = {0020-9910,1432-1297},
   MRCLASS = {57M60 (57S05)},
  MRNUMBER = {2299561},
MRREVIEWER = {Rafael\ Oswaldo\ Ruggiero},
       DOI = {10.1007/s00222-007-0039-0},
       URL = {https://doi.org/10.1007/s00222-007-0039-0},
}

@incollection {mann_gnr,
    AUTHOR = {Mann, Kathryn and Tshishiku, Bena},
     TITLE = {Realization problems for diffeomorphism groups},
 BOOKTITLE = {Breadth in contemporary topology},
    SERIES = {Proc. Sympos. Pure Math.},
    VOLUME = {102},
     PAGES = {131--156},
 PUBLISHER = {Amer. Math. Soc., Providence, RI},
      YEAR = {2019},
      ISBN = {978-1-4704-4249-1},
   MRCLASS = {57S25 (55R40 57M60)},
  MRNUMBER = {3967366},
MRREVIEWER = {Marja\ K.\ Kankaanrinta},
       DOI = {10.1090/pspum/102/11},
       URL = {https://doi.org/10.1090/pspum/102/11},
}

@article {min,
    AUTHOR = {Min, Honglin},
     TITLE = {Hyperbolic graphs of surface groups},
   JOURNAL = {Algebr. Geom. Topol.},
  FJOURNAL = {Algebraic \& Geometric Topology},
    VOLUME = {11},
      YEAR = {2011},
    NUMBER = {1},
     PAGES = {449--476},
      ISSN = {1472-2747,1472-2739},
   MRCLASS = {20F67 (20F28 20F65 57M07)},
  MRNUMBER = {2783234},
MRREVIEWER = {Igor\ Belegradek},
       DOI = {10.2140/agt.2011.11.449},
       URL = {https://doi.org/10.2140/agt.2011.11.449},
}

@article {bonahon_cobordism_1983,
    AUTHOR = {Bonahon, Francis},
     TITLE = {Cobordism of automorphisms of surfaces},
   JOURNAL = {Ann. Sci. \'Ecole Norm. Sup. (4)},
  FJOURNAL = {Annales Scientifiques de l'\'Ecole Normale Sup\'erieure.
              Quatri\`eme S\'erie},
    VOLUME = {16},
      YEAR = {1983},
    NUMBER = {2},
     PAGES = {237--270},
      ISSN = {0012-9593},
   MRCLASS = {57N05 (57N10)},
  MRNUMBER = {732345},
MRREVIEWER = {Klaus\ Johannson},
       URL = {http://www.numdam.org/item?id=ASENS_1983_4_16_2_237_0},
}

@article {chilina_2025,
    AUTHOR = {Chilina, E. E.},
     TITLE = {On the centralizer and conjugacy of pseudo-{A}nosov
              homeomorphisms},
   JOURNAL = {Russ. J. Nonlinear Dyn.},
  FJOURNAL = {Russian Journal of Nonlinear Dynamics},
    VOLUME = {21},
      YEAR = {2025},
    NUMBER = {1},
     PAGES = {103--116},
      ISSN = {2658-5324,2658-5316},
   MRCLASS = {37E30 (37C15 37D20 57R30)},
  MRNUMBER = {4893632},
       DOI = {10.20537/nd250301},
       URL = {https://doi.org/10.20537/nd250301},
}

@article {humphries_gms_1989,
    AUTHOR = {Humphries, Stephen P.},
     TITLE = {Free products in mapping class groups generated by {D}ehn
              twists},
   JOURNAL = {Glasgow Math. J.},
  FJOURNAL = {Glasgow Mathematical Journal},
    VOLUME = {31},
      YEAR = {1989},
    NUMBER = {2},
     PAGES = {213--218},
      ISSN = {0017-0895,1469-509X},
   MRCLASS = {57N05 (20E06 20F28 32G15)},
  MRNUMBER = {997819},
MRREVIEWER = {J.\ S.\ Birman},
       DOI = {10.1017/S001708950000776X},
       URL = {https://doi.org/10.1017/S001708950000776X},
}

@article {kerckhoff_annals_1983,
    AUTHOR = {Kerckhoff, Steven P.},
     TITLE = {The {N}ielsen realization problem},
   JOURNAL = {Ann. of Math. (2)},
  FJOURNAL = {Annals of Mathematics. Second Series},
    VOLUME = {117},
      YEAR = {1983},
    NUMBER = {2},
     PAGES = {235--265},
      ISSN = {0003-486X},
   MRCLASS = {32G15 (30F10 57M99 57N10)},
  MRNUMBER = {690845},
MRREVIEWER = {William Harvey},
       DOI = {10.2307/2007076},
       URL = {https://doi.org/10.2307/2007076},
}

@article {thurston_bams_1988,
    AUTHOR = {Thurston, William P.},
     TITLE = {On the geometry and dynamics of diffeomorphisms of surfaces},
   JOURNAL = {Bull. Amer. Math. Soc. (N.S.)},
  FJOURNAL = {American Mathematical Society. Bulletin. New Series},
    VOLUME = {19},
      YEAR = {1988},
    NUMBER = {2},
     PAGES = {417--431},
      ISSN = {0273-0979},
   MRCLASS = {57M99},
  MRNUMBER = {956596},
MRREVIEWER = {N. V. Ivanov},
       DOI = {10.1090/S0273-0979-1988-15685-6},
       URL = {https://doi.org/10.1090/S0273-0979-1988-15685-6},
}

@article {inf_meta_ggd,
    AUTHOR = {Kapari, Pankaj and Rajeevsarathy, Kashyap and Sanghi, Apeksha},
     TITLE = {Infinite metacyclic subgroups of the mapping class group},
   JOURNAL = {Groups Geom. Dyn.},
  FJOURNAL = {Groups, Geometry, and Dynamics},
    VOLUME = {19},
      YEAR = {2025},
    NUMBER = {1},
     PAGES = {281--313},
      ISSN = {1661-7207,1661-7215},
   MRCLASS = {57M60 (57K20)},
  MRNUMBER = {4862334},
       DOI = {10.4171/ggd/791},
       URL = {https://doi.org/10.4171/ggd/791},
}

@incollection {leininger_survey,
    AUTHOR = {Kent, IV, Richard P. and Leininger, Christopher J.},
     TITLE = {Subgroups of mapping class groups from the geometrical
              viewpoint},
 BOOKTITLE = {In the tradition of {A}hlfors-{B}ers. {IV}},
    SERIES = {Contemp. Math.},
    VOLUME = {432},
     PAGES = {119--141},
 PUBLISHER = {Amer. Math. Soc., Providence, RI},
      YEAR = {2007},
      ISBN = {978-0-8218-4227-0; 0-8218-4227-7},
   MRCLASS = {57M60 (20F34 20H10 30F60)},
  MRNUMBER = {2342811},
MRREVIEWER = {Natalia\ Kopteva},
       DOI = {10.1090/conm/432/08306},
       URL = {https://doi.org/10.1090/conm/432/08306},
}

@book {primer,
    AUTHOR = {Farb, Benson and Margalit, Dan},
     TITLE = {A primer on mapping class groups},
    SERIES = {Princeton Mathematical Series},
    VOLUME = {49},
 PUBLISHER = {Princeton University Press, Princeton, NJ},
      YEAR = {2012},
     PAGES = {xiv+472},
      ISBN = {978-0-691-14794-9},
   MRCLASS = {57M50 (20F36 20F65 57M07 57N05)},
  MRNUMBER = {2850125},
MRREVIEWER = {Stephen P. Humphries},
}
\end{document}